\newtheorem{Theorem}{Theorem}[section]
\newtheorem{Lemma}[Theorem]{Lemma}
\newtheorem{Proposition}[Theorem]{Proposition}
\def\H{{\mathcal H}}
\def\O{{\mathcal O}}
\def\mod{{\rm mod}}
\def\Hom{{\rm Hom}}
\def\End{{\rm End}}
\def\Ind{{\rm Ind}}
\def\Res{{\rm Res}}
\def\Sch{{\mathcal S}}
\def\GL{{\rm GL}}
\def\Z{{\mathcal Z}}
\newcommand{\hcr}{\operatorname{^*R}}
\newcommand{\hci}{\operatorname{R}}
\begin{document}

\title{On decomposition numbers and Alvis-Curtis duality}

\author{Bernd Ackermann}   
\address{Inst.f.Algebra u.Zahlentheorie, Universität Stuttgart, 70550  
  Stuttgart, Germany}  
\email{ackermann@mathematik.uni-stuttgart.de}  
  
\author{Sibylle Schroll}  
\address{ Mathematical Institute,   
24-29 St Giles, Oxford OX1 3LB, United Kingdom}  
\email{schroll@maths.ox.ac.uk}  
\thanks{The   
second author acknowledges support through a Marie Curie    
Fellowship}  

\maketitle

\begin{abstract}
We show that for general linear groups ${\rm GL}_n(q)$ as well as for
 $q$-Schur algebras
  the knowledge of the modular Alvis-Curtis duality over fields of
characteristic $\ell$, $\ell \nmid q$, is equivalent
to the knowledge of the decomposition numbers.
\end{abstract}

\section{Introduction}

In the beginning of the nineties, Brou\'e's abelian
defect group conjecture \cite{B} related the homological and the character
theoretic aspects of representation theory. It follows from the conjecture that certain
character correspondences should in fact be consequences of derived
equivalences. A related phenomenon is the Alvis-Curtis
duality. Originally it was defined as a character duality of finite groups of
Lie type, which in particular sends an irreducible character onto
another irreducible character up to sign. Brou\'e has shown \cite{B}
that it is what he calls a
perfect isometry, which is a character correspondence
with signs and some arithmetic properties.
In '99 Cabanes and Rickard \cite{CR} then showed
that the Alvis-Curtis duality is in fact induced by a derived
equivalence obtained by tensoring with a certain
complex.

On the other hand, questions on perfect isometries naturally lead to
questions on decomposition numbers.
These numbers are virtually unknown for almost all groups.
In particular, the question of
how to calculate the decomposition matrices of
the general linear group has not yet been satisfactorily answered.
James gave an algorithm for calculating the
decomposition matrices of $\GL_{n}(q)$, for $n \leq 10$,  in non-describing
characteristic \cite{J3}. But for larger $n$ there is no algorithm for the calculation of
the decomposition matrix of $\GL_n(q)$.
Similary, except for small cases \cite{F, M1, M2, JLM, Ta}, the decomposition
matrices of the $q$-Schur and the Hecke algebra are not known. In this
paper, we show that the complete knowledge of the Alvis-Curtis duality
answers these questions for the general linear group and
the $q$-Schur algebra.

More precisely, we construct a
cochain complex for the $q$-Schur algebra. We then relate it to the
complex inducing the Alvis-Curtis duality \cite{CR} as well as to a
complex for the Hecke algebra \cite{LS}.
We show that this complex for the $q$-Schur algebra induces a self-derived
equivalence and thus gives rise to a duality operation in the
Grothendieck group. We analyse how the different duality
operators arising in this context
act on the different simple modules in the modular case. In fact, we
show that calculating this action for the
general linear group and for the $q$-Schur algebra
is equivalent to calculating the decomposition matrices of
these algebras.

In the following we describe the layout of the paper. Section 2 contains
notation and background results needed later on. In section
3, Theorem 3.2, we show the main result for general linear groups. A link with
 the Mullineux map and the Hecke algebra is established in Theorem 4.1 in
section 4. Section 5 contains the construction of the complex for the
$q$-Schur algebra. In Theorem 5.1 we show that it induces a derived equivalence
 and we calculate the induced action on the simple modules in the
Grothendieck group. Finally in Theorem 6.1 in section 6 we give a
summary of the results on
the different decomposition matrices.

\section{Preliminaries}\label{sec2}

Let $G = {\rm GL}_n(q)$ and $(K,\O,k)$ an $\ell$-modular system
with $\ell$ coprime to $q$. Let $e$ be the smallest integer $i$
such that modulo $\ell$
$$ 1 + q + \ldots q^{i-1} = 0 $$
Throughout we fix the following notation.
Denote by $T$ the maximal torus of invertible diagonal matrices in $G$,
by $U$ the group of upper unitriangular  matrices
and set $B = UT$.
Let $W$ be the Weyl group of $G$. Then $W \cong \mathfrak{S}_n$ and we
identify $\mathfrak{S}_n$ with the subgroup of permutation matrices of
$G$.
Denote by $S$ the generating set of $\mathfrak{S}_n$ corresponding to the set of
basic transpositions
$s_i=(i,i+1)$ for $1\leq i\leq n-1$. For any composition $\lambda$ of $n$
denote by $\mathfrak{S}_\lambda$ the associated Young subgroup of
$\mathfrak{S}_n$ and by  $P_\lambda$ the standard parabolic subgroup
of $G$ generated by $B$ and $\mathfrak{S}_\lambda$. Denote by $U_\lambda$
the unipotent radical of $P_\lambda$
and by $L_\lambda$ the standard Levi complement of $U_\lambda$ in
$P_\lambda$.
For any subgroup  $H$ of $G$ let $e_H$ be the
idempotent of $KG$ defined by $e_H = \frac{1}{|H|}\underset{x\in
  H}\sum\ x$.
Note that if $H$ is an $\ell'$-subgroup then $e_H \in RG$
for $R$ any of $\{K,\O ,k\}$.
Denote by ${\mathcal E}(G,1)$ the unipotent character series, that is the
irreducible constituents of $\Ind^G_B(K)$. Define the central idempotent
 $f= \sum_{\chi} e(\chi)$
 of $KG$ where
$\chi$ runs over the set of unipotent characters of $G$ and
where $e(\chi) = \frac{\chi(1)}{|G|}\sum_{g \in G}
\chi(g^{-1})g$. Then $f$ is the block idempotent of the sum of all
unipotent $\ell$-blocks.
In a similar way
define $f_\lambda$ as the sum of the $e(\psi)$ where $\psi$ runs through the
unipotent characters of $L_\lambda$.
Let $G_{ss}$ be the set of semisimple elements of $G$ and
$G^{reg}_{ss}$ the set of $\ell$-regular semisimple elements  of
$G$. By $(g) \in_G G$ we denote the conjugacy class of $g$ in $G$.

\bigskip

For $R \in \{K,\O ,k\}$ and any composition $\lambda$ of $n$ denote
by $R^G_{L_\lambda}$ the Harish-Chandra
induction functor that associates to a $RL_\lambda$-module $N$ the $RG$-module
$RG e_{U_\lambda} \otimes_{RL_\lambda} N$. Its left and right adjoint called
Harish-Chandra restriction and denoted by $\hcr^G_{L_\lambda}$ associates to
a $RG$-module $M$ the $RL_\lambda$-module $e_{U_\lambda}M$.

\bigskip

\subsection{Combinatorics}
Denote  by
$\bar{\Lambda}(k,n)$ the set of compositions of $n$ with
exactly $k$ non-zero parts and denote by
$\bar{\Lambda}^+(k,n)$ the subset containing compositions
$\lambda=(\lambda_1, \ldots, \lambda_k)$ such that
$\lambda_1 \geq \ldots \geq \lambda_k$. Let
 $\Lambda(k,n)= \bigoplus_{i \leq k} \bar{\Lambda}(i,n)$ and
$\Lambda^+(k,n)= \bigoplus_{i \leq k} \bar{\Lambda}^+(i,n)$ .
Write $\Lambda(n)$ for $\Lambda(n,n)$  and similarly
$\Lambda^+(n)$ for $\Lambda^+(n,n)$.
For $\nu= (\nu_1, \ldots, \nu_a) \in \Lambda(n)$
define $\Lambda(\nu)$ as the subset of
all compositions $\gamma=(\gamma_1, \ldots, \gamma_{h})$
$\in \Lambda(n)$
such that $(\gamma_1, \ldots, \gamma_{h_1}) \in \Lambda(\nu_1)$,
$(\gamma_{h_1+1}, \ldots, \gamma_{h_1+h_2}) \in \Lambda(\nu_2)$ and so on
until
$(\gamma_{h_1+ \ldots +h_{a-1}+1}, \ldots, \gamma_{h})
\in \Lambda(\nu_a)$.
Write $|\lambda|$ for the number of non-zero
parts of a composition $\lambda$.
An $e$-regular partition of $n$ is a partition $\lambda = (\lambda_1 ,
\ldots , \lambda_a)$ such that if $\lambda_{i+1} =  \lambda_{i+2} =
\ldots =\lambda_{i+h}$ then $h <e$.

There exists a partial order on the set of compositions of $n$ induced by the
partial order on the generating set $S$ of $\mathfrak{S}_n$ given by inclusion of subsets. More precisely,
the subsets of $S$ are in bijection with compositions of $n$~: Let
$\lambda= (\lambda_1, \ldots, \lambda_h ) $ be a composition of
 $n$ and set
$$\lambda_i^+= \lambda_1 + \lambda_2 + \ldots + \lambda_i$$
for $1 \leq i \leq h$.
Then $\lambda$ corresponds to the subset $I_\lambda$ of $S$ given by
$I_\lambda = \{s_1, \ldots, s_n \} \setminus \{\lambda_i^+ | 1 \leq i
\leq h\}$. Note that $|I_\lambda|=(n-1)-(h-1)=n-h$.

An easy calculation then shows

\bigskip

\begin{Lemma} Suppose $I_\lambda$ and $I_\mu$ are subsets of $S$
  corresponding to $\lambda \in
\bar{\Lambda}(h,n)$ and $\mu \in \bar{\Lambda}(k,n)$. Then

(i) $|I_\lambda| \leq |I_\mu|$ if and only if $h \geq k$

(ii) $I_\lambda \subset I_\mu$ if and only if for all $\mu_i^+$ there exists $\lambda_j^+$ such
that  $\mu_i^+ = \lambda_j^+$.
\end{Lemma}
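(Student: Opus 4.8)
The plan is to reduce both statements to a short count involving what I will call the \emph{marker sets}. For a composition $\lambda$ of $n$ with $|\lambda| = h$, write
$$A_\lambda := \{\lambda_1^+, \lambda_2^+, \ldots, \lambda_h^+\} \subseteq \{1, 2, \ldots, n\}.$$
Since $\lambda_h^+ = n$ for every composition, we always have $n \in A_\lambda$, and by the definition of $I_\lambda$ we have $I_\lambda = S \setminus \{\, s_j : j \in A_\lambda \setminus \{n\} \,\}$. Here $A_\lambda \setminus \{n\}$ has exactly $h-1$ elements, all lying in $\{1,\ldots,n-1\}$, so precisely the $h-1$ generators $s_{\lambda_1^+}, \ldots, s_{\lambda_{h-1}^+}$ are deleted from $S$ to form $I_\lambda$.

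Part (i) is then pure arithmetic: $|I_\lambda| = |S| - (h-1) = (n-1) - (h-1) = n - h$, and likewise $|I_\mu| = n - k$; hence $|I_\lambda| \leq |I_\mu|$ if and only if $n - h \leq n - k$, i.e.\ if and only if $h \geq k$.

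For part (ii), note that $I_\lambda$ and $I_\mu$ are the complements in $S$ of $\{\, s_j : j \in A_\lambda \setminus \{n\} \,\}$ and $\{\, s_j : j \in A_\mu \setminus \{n\} \,\}$ respectively, so $I_\lambda \subseteq I_\mu$ holds exactly when $A_\mu \setminus \{n\} \subseteq A_\lambda \setminus \{n\}$. Since $n$ belongs to both $A_\lambda$ and $A_\mu$, this inclusion is equivalent to $A_\mu \subseteq A_\lambda$, which unwinds precisely to the asserted condition: for every $\mu_i^+$ there is some $\lambda_j^+$ with $\mu_i^+ = \lambda_j^+$.

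As the authors indicate, there is no real obstacle; the only point that needs attention is the off-by-one shift arising from the fact that the top partial sum $\lambda_h^+ = n$ does not index an element of $S = \{s_1,\ldots,s_{n-1}\}$. Once one records that $n$ lies in every marker set and may therefore be freely adjoined to or removed from both sides of an inclusion, each of the two parts collapses to a single line.
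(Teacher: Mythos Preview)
Your proof is correct and is precisely the ``easy calculation'' the paper alludes to; the paper gives no further argument beyond recording $|I_\lambda|=(n-1)-(h-1)=n-h$ in the text preceding the lemma. Your explicit handling of the marker set $A_\lambda$ and the observation that $\lambda_h^+=n$ does not index a generator in $S$ simply makes rigorous what the authors leave implicit.
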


\bigskip

For compositions satisfying $(ii)$ we write $\lambda \preceq
\mu$. Note that this is a partial
order and that $\lambda \preceq \mu$  implies $\lambda\unlhd\mu$ in the
dominance order.

\bigskip

\subsection{Simple modules of the general linear group}\label{sec22}
Let $s$ be a semisimple element of $G$. Then the rational canonical form
 of $s$ is some block diagonal matrix where the diagonal blocks are
 given 
as companion matrices $(\sigma_i)$ of
the elementary divisors $s_i$ of $s$ which are the irreducible factors
of the minimal polynomial of $s$ over $\operatorname{GF}(q)$ since $s$
is semisimple. Assume that $(\sigma_i)$ appears precisely $k_i$ times
on the diagonal and let $d_i$ be the degree of $\sigma_i$, then
$(\sigma_i)$ is a $d_i\times d_i$-matrix and $d_1k_1 + \dots + d_rk_r = n$.
The centralizer $C_G(s)$ of $s$ in $G$ is then isomorphic to
$$  C_G(s) \cong {\rm GL}_{k_1}(q^{d_1}) \times \ldots \times
{\rm GL}_{k_r}(q^{d_r}).$$
Details and proofs can be found in \cite{FongSrinivasan}. With these notations define
 $\varepsilon_s = (-1)^{n + k_1 + \ldots + k_r}$.

Let
$\lambda^{(i)}$ be a partition of $k_i$.
Following James \cite{J2} we have an irreducible
$K\GL_{d_ik_i}(q)$-module $S(s_i,\lambda^{(i)})$ called Specht module
and each irreducible $KG$-module is of the form
\begin{equation*}
S(s,\tilde{\lambda})=R^G_L \left( S(s_1,\lambda^{(1)})\otimes
\dots \otimes S(s_r,\lambda^{(r)})\right)
\end{equation*}
where $\tilde{\lambda}$ is the
multipartition $(\lambda^{(1)},\lambda^{(2)},\dots,\lambda^{(r)})$ of
$(k_1, \ldots, k_r)$
and $L$ is the Levi subgroup
$\GL_{d_1k_1}(q)\times \dots \times \GL_{d_rk_r}(q)$.
Note that the ordinary irreducible character of $S(s,\tilde{\lambda})$
corresponds exactly to the character
$\chi_{s,\tilde{\lambda}}$ in the parametrization given by Green
\cite{G} and the set
$$\{S(s,\tilde{\lambda}) | (s)=(s_i^{k_i}) \in_G G_{ss} ,
\tilde{\lambda} \vdash (k_1, \ldots , k_r)  \}$$
gives a complete set of irreducible $KG$-modules \cite{J2}.

In \cite{J2} James constructs a certain $OG$-lattice
$S_O(s,\tilde{\lambda})$ in $S_K(s,\tilde\lambda)$. It is shown there
that the $\ell$-modular reduction $S_F(s,\tilde\lambda)$ of this lattice
no longer needs to be irreducible.
However, if $s$ is $\ell$-regular it has an irreducible head
$L(s,\tilde{\lambda})$ and we have
$$
L(s,\tilde{\lambda})=R^G_L \left( L(s_1,\lambda^{(1)})\otimes \dots
\otimes L(s_r,\lambda^{(r)})\right)
$$
where each $L(s_i,\lambda^{(i)})$ is the irreducible head of $S(s_i,\lambda^{(i)})$.
Moreover, the set
$$\{L(s,\tilde{\lambda}) | (s)=(s_i^{k_i}) \in_G G^{reg}_{ss} ,
\tilde{\lambda} \vdash (k_1, \ldots , k_r)  \}$$
is a complete set of irreducible $kG$-modules.

\bigskip

\subsection{Hecke algebras} Let $R \in \{K, \O, k \}$ and denote by
$\H_R$ or by  ${\H}$ the Iwahori Hecke algebra
$\H_{R,q}(\mathfrak{S}_n)$. This is defined
as the free $R$-module with basis $\{ T_w | w \in S_n\}$ and with
multiplication given by
$$
T_wT_s = \left\{
\begin{array}{ll}
T_{ws} & \mbox{if} \;\;\ \ell(ws) > \ell(w) \\
qT_w + (q-1) T_{ws} & \mbox{if} \;\; \ell(ws) < \ell(w) \\
\end{array} \right.
$$
for $w \in \mathfrak{S}_n$ and $s \in S$. For every composition $\lambda$ of $n$
we denote by $\H_\lambda$ the parabolic subalgebra of $\H$ isomorphic
to the Iwahori Hecke algebra
$\H_{R,q}(\mathfrak{S}_\lambda)$.

For every parabolic subalgebra $\H_\lambda$ there is an induction
functor
$$\begin{array}{ccccc} \Ind_{\H_\lambda}^\H &:& \H_\lambda - \mod &
  \rightarrow & \H - \mod \\
&& N & \mapsto & \H \otimes_{\H_\lambda} N \\
\end{array}$$
This is an exact functor and its left and right adjoint is given by
$$\begin{array}{ccccc} \Res_{\H_\lambda}^\H &:& \H - \mod &
  \rightarrow & \H_\lambda - \mod \\
&& M & \mapsto & \H_\lambda \otimes_{\H} N \\
\end{array}$$

A full set of irreducible $\H_K$-modules is parametrised as follows
$$\{S^\lambda | \lambda \in \Lambda^+(n) \}.$$
As $\H_k$-modules these need no longer be irreducible; however, if the
partition $\lambda$ is $e$-regular, then  $S^\lambda$ has an
irreducible head $L^\lambda$ and the set
\begin{equation*}
\{ L^\lambda | \lambda \in \Lambda^+(n),\ \text{$\lambda$ is
  $e$-regular}\}
\end{equation*}
is a complete set of irreducible $\H_k$-modules \cite{DJ1}.

In \cite{I}
an involution $\alpha: \H \rightarrow \H$ is defined by
 $T_w \mapsto (-q)^{\ell(w)} (T_{w^{-1}})^{-1}$ for $w \in \mathfrak{S}_n$.
For an $\H$-module $V$ define another $\H$-module $_\alpha V$ by setting
$h.v = \alpha(h)v$ for all $h \in \H$ and $v \in {_\alpha V}$. Then if $q=1$,
$_\alpha V = sign \otimes V$, where $sign$ is the sign representation of
$\mathfrak{S}_n$. Furthermore
it is shown in \cite{Br} that for any $e$-regular
partition $\lambda$  of $n$ and any simple $\H$-module $L^\lambda$
we have $_\alpha (L^\lambda) =
L^{m(\lambda)}$ where $m(\lambda)$ is the image of $\lambda$
by the Mullineux map $m$ (for the definition of $m$ see \cite{Mu}).

\bigskip

\subsection{$q$-Schur algebras}

For each  composition $\lambda$ of $n$ there is a permutation module
$M^\lambda$ of $\H= \H_R$ for $R \in \{K, \O, k \}$ given by $M^\lambda = \H x_\lambda$ where
$x_\lambda = \sum_{w \in \mathfrak{S}_\lambda} T_w$.
The $q$-Schur algebra $\Sch_R(k,n)$ is the endomorphism algebra
$\End_{\H}(\bigoplus_{\lambda \in \Lambda(k,n)} M^\lambda)$.
We are mostly interested in the $q$-Schur algebra
$\Sch_R(n,n)$ and certain of its subalgebras. Note however, that for $k \geq n$
there is a Morita equivalence between $\Sch_R(k,n)$ and
$\Sch_R(n)$.  We write $\Sch_R(n)$ or
$\Sch(n)$ for $\Sch_R(n,n)$.
The algebra $\Sch(n)$ is free over $R$ and has basis
$$\{ \Phi_{\lambda,\mu}^u | \lambda, \mu \in \Lambda(n), u \in
{\mathcal D_{\lambda,\mu}} \}$$ where ${\mathcal D_{\lambda,\mu}}$ is
the set of distinguished double coset representatives of $\mathfrak{S}_\lambda \backslash \mathfrak{S}_n
/\mathfrak{S}_\mu$ which are the unique elements of shortest length in
their double coset \cite{DJ}. The elements $\Phi_{\lambda,\lambda}^1$
for $\lambda \in \Lambda(n)$ are
orthogonal idempotents in $\Sch(n)$. If $\lambda = (1^n)$ then
multiplication by the idempotent $e=
\Phi_{\lambda,\lambda}^1$ is called the Schur functor and
$e \Sch(n) e \cong {\H}$. Furthermore, for each composition $\nu$,
we define a generalization of the Schur functor. That is,
we define an idempotent of $\Sch(n)$
$$e_\nu = \sum_{\lambda \in \Lambda(\nu)} \Phi_{\lambda,\lambda}^1.$$
The subalgebra $\Sch(\nu) = \Sch_R(\nu_1) \otimes \ldots \otimes
\Sch_R(\nu_h)$ naturally embeds
into $e_\nu \Sch(n) e_\nu$ (see for example \cite{BDK}, section 4.2 for details)
and we can
define the following functors
$$\begin{array}{ccccc}
\Sch(n)e_\nu \otimes_{\Sch(\nu)} - & : & \Sch(\nu) -\mod & \rightarrow &
\Sch(n)-\mod \\
&&&&\\
e_\nu. - & : &  \Sch(n)-\mod &\rightarrow & \Sch(\nu) -\mod.\\
\end{array}$$

The $q$-Schur algebra $\Sch_k(n)$ is quasi-hereditary, the system
of labels of the standard modules being partitions of $n$ ordered by
the dominance ordering. The general theory of quasi-hereditary
algebras produces a parametrisation
 of the simple $\Sch_k(n)$-modules $L(\lambda)$, $\lambda \in
 \Lambda^+(n)$. Furthermore, the set $\Lambda^+(n)$ also
 parametrizes the so-called standard modules $\Delta(\lambda)$ and the
 costandard modules
$\nabla(\lambda)$.

\bigskip

By  Takeuchi \cite{T} and  Brundan, Dipper, Kleshchev \cite{BDK}
the $q$-Schur algebra $\Sch_R(n)$ is Morita equivalent to the
quotient algebra $C_R(n)= R G /I $ where $I$ is the annihilator of the
permutation module $R (G/B)$ on the cosets $G/B$ of $B$ in $G$.
In fact the Morita equivalence is given
by a $RG$-$\Sch_R(n)$-bimodule $Q$ and its $C_R(n)$-linear dual (see
\cite[3.4g]{BDK} for the precise definition).
Furthermore, $C_\O(n) = \O G/I$ is isomorphic to $\O Gf$ (see \cite{S2} Thm 1
for a proof) and $fQ=Q$ thus $Q$ is also an $\O Gf$-module where $f$
is again the central idempotent of the sum of unipotent characters
defined in the beginning of section \ref{sec2}. Furthermore
$$\begin{array}{ccccc} Q \otimes_{\Sch(n)} - &: &\Sch(n)-\mod
  &\rightarrow &\O Gf-\mod \\
&&&&\\
Q' \otimes_{\O Gf} - &:& \O Gf-\mod& \rightarrow &\Sch(n)-\mod \\
\end{array}$$
are mutually inverse functors, where $Q'$ denotes the $\O
Gf$-linear dual of $Q$.

\bigskip

\subsection{Alvis-Curtis duality}

Originally Alvis-Curtis duality was introduced as a duality operator
on the ordinary character group of a finite group of Lie
type. Deligne and Lusztig defined it as the
Lefschetz character of a certain complex of the module category in
characteristic 0. Cabanes and
Rickard \cite{CR} formalized this definition by using coefficient
systems and extended it to
representations over any ring containing $p^{-1}$, where $q =
p^\alpha$. Although their
definition holds for all finite groups of Lie type we will present it
here only for $G=\GL_n(q)$.

\bigskip

Let $X_G$ be the complex associated to the  coefficient system
on the simplicial complex given by the
set of compositions of $n$ with the partial order $\preceq$. It sends
a composition $\lambda$ to the $RG$-$RG$-bimodule $RG e_{U_\lambda}
\otimes_{RL_\lambda} e_{U_\lambda} RG$ and the inclusion $\lambda
\prec \nu$ to the map
$$\begin{array}{cccc}
\alpha_{\lambda,\nu} : & RG e_{U_\lambda}
\otimes_{RL_\lambda} e_{U_\lambda} RG & \rightarrow & RG e_{U_\nu} 
\otimes_{RL_\nu} e_{U_\nu} RG\\
& x \otimes y  & \mapsto & x \otimes y.
\end{array}$$

The associated
cochain complex $X_G$, shifted and renumbered, such that it is concentrated in degrees $0$ to
$n-1$ has degree $i$th term
$$ X_G^i = \bigoplus_{\lambda \in \bar{\Lambda}(n-i,n)} RG
e_{U_\lambda} \otimes_{RL_\lambda} e_{U_\lambda} RG$$
for $0 \leq i \leq n-1$.  Its differential is given by
$$d^i = \sum_{\lambda \in \bar{\Lambda}(n-i,n)} \sum_{\stackrel{\nu
    \in \bar{\Lambda}(n-i-1,n)}{\lambda \prec \nu}} (-1)^{\lambda/\nu} \alpha_{\lambda,\nu}
$$
where $\lambda/\nu$ is the unique partial sum $\lambda_j^+$ of $\lambda$
such that there is no partial sum $\nu_k^+$ of $\nu$ such that
$\lambda _j^+ = \nu_k^+$.

\bigskip

In \cite{CR} Cabanes and Rickard showed
that tensoring with $X_G$
induces a derived self-equivalence of
the module category of $G$. Therefore we obtain an induced map in
the Grothendieck group of $G$, that is a bijection in characteristic
$0$,
$$ D_G(-)  =  \sum_{\lambda \in \Lambda(n)} (-1)^{|\lambda|} RG e_{U_\lambda} \otimes_{RL_\lambda}
e_{U_\lambda} (-). $$

\bigskip

For any standard Levi subgroup $L_\lambda = \GL_{\lambda_1}(q) \times \ldots
\times  \GL_{\lambda_a}(q)$ we define
$$X_{L_\lambda} = X_{\GL_{\lambda_1}(q)} \otimes \ldots \otimes
X_{\GL_{\lambda_a}(q)}.$$
In fact, it is easy to see that
$X_{L_\lambda}$ coincides with the complex defined for $L_\lambda$
when it is itself considered as a finite group of Lie type.

It then follows from \cite{CR} that in the Grothendieck group
Alvis-Curtis duality commutes with
Harish-Chandra
induction and restriction~:

\bigskip

\begin{Lemma}\cite[6.1]{CR}\label{DualityCommutes}
Let $L$ be a standard Levi subgroup of $G$. Then
$$R_L^G\circ D_L [N] = D_G\circ R_L^G [N]$$
$$\hcr^G_L\circ D_G [M] = D_L\circ \hcr^G_L [M]$$ for $[N]$ in
$K_0(RL)$ and $[M]$ in $K_0(RG)$ with $R \in \{K,\O, k\}$.
\end{Lemma}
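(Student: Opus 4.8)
The plan is to prove the two identities in the Grothendieck group directly; this is a classical computation (over $K$ due to Curtis and Kawanaka) carried out in general in \cite[6.1]{CR}. First I would observe that the two identities are equivalent. Since $\hcr^G_L$ is both the left and the right adjoint of $R^G_L$, and since the operators $D_G$ and $D_L$ are, up to a global sign, self-adjoint for the natural non-degenerate pairing on the Grothendieck groups (the character pairing when $R=K$, the Cartan pairing when $R=k$; over $K$ this is classical and in the modular case it reflects the self-duality of the bimodule complexes $X_G$, $X_L$ in \cite{CR}), taking adjoints of $R^G_L\circ D_L = D_G\circ R^G_L$ yields $D_L\circ\hcr^G_L = \hcr^G_L\circ D_G$; alternatively the argument below applies verbatim with $\hcr$ in place of $R$. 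So it suffices to prove $R^G_L\circ D_L[N] = D_G\circ R^G_L[N]$ for $[N]\in K_0(RL)$.

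Write $L = L_\mu$ with $\mu=(\mu_1,\dots,\mu_a)$. The standard Levi subgroups of $L$ are exactly the $L_\gamma$ with $\gamma\in\Lambda(\mu)$, i.e.\ $\gamma\preceq\mu$. From $D_L = D_{\GL_{\mu_1}(q)}\otimes\cdots\otimes D_{\GL_{\mu_a}(q)}$ and the additivity of $|\,\cdot\,|$ over the factors, unfolding the definition of each $D_{\GL_{\mu_j}(q)}$ gives, in $K_0(RL)$,
$$D_L = \sum_{\gamma\preceq\mu}(-1)^{|\gamma|}\;R^L_{L_\gamma}\circ\hcr^L_{L_\gamma},$$
and transitivity of Harish-Chandra induction, $R^G_{L_\mu}\circ R^L_{L_\gamma} = R^G_{L_\gamma}$, yields
$$R^G_L\circ D_L[N] = \sum_{\gamma\preceq\mu}(-1)^{|\gamma|}\;R^G_{L_\gamma}\;\hcr^L_{L_\gamma}[N].$$
On the other hand the definition of $D_G$ gives $D_G\circ R^G_L[N] = \sum_{\lambda\in\Lambda(n)}(-1)^{|\lambda|}\,R^G_{L_\lambda}\big(\hcr^G_{L_\lambda}\circ R^G_{L_\mu}\big)[N]$, and into this I would feed the Mackey formula for the Harish-Chandra functors, which, in contrast to the Deligne--Lusztig setting, is a theorem without hypotheses: $\hcr^G_{L_\lambda}\circ R^G_{L_\mu}$ is the sum, over the distinguished representatives $w$ of $\mathfrak{S}_\lambda\backslash\mathfrak{S}_n/\mathfrak{S}_\mu$, of $R^{L_\lambda}_{L_\lambda\cap{}^wL_\mu}\circ(\text{conjugation by }w)\circ\hcr^{L_\mu}_{L_\mu\cap{}^{w^{-1}}L_\lambda}$, with $L_\lambda\cap{}^wL_\mu$ and $L_\mu\cap{}^{w^{-1}}L_\lambda$ again standard Levi subgroups. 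Using transitivity of $R$ once more, together with the invariance of Harish-Chandra induction under $G$-conjugation of the Levi, $D_G\circ R^G_L[N]$ turns into a double sum over the pairs $(\lambda,w)$; collecting terms according to the standard Levi $M := L_\mu\cap{}^{w^{-1}}L_\lambda$ of $L$ reduces the claim to the combinatorial identity $\sum_{(\lambda,w)}(-1)^{|\lambda|} = (-1)^{|\gamma|}$, where $\gamma$ is the composition with $L_\gamma=M$ and the sum runs over the $(\lambda,w)$ that produce $M$; this is an inclusion--exclusion over the Boolean lattice of subsets of $S$, as in Curtis's original argument.

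The hard part is exactly this last step: pushing the sign conventions correctly through the Mackey expansion and verifying the cancellation, of M\"obius type, that kills the contributions of the double cosets with $w\neq1$ while recombining the remaining terms into $R^G_L\circ D_L$. If that bookkeeping becomes unwieldy, the cleaner route, and the one actually taken in \cite{CR}, is to lift the statement to the derived category: using the Mackey decomposition of the Harish-Chandra bi-restriction of $RG$ and the acyclicity of the relevant sub-coefficient-systems, one shows that $\hcr^G_L$ applied to one side of the bimodule complex $X_G$ is homotopy equivalent to the complex assembled from $X_L$, so that $R^G_L\circ(X_L\otimes_{RL}-)$ and $(X_G\otimes_{RG}-)\circ R^G_L$ agree up to quasi-isomorphism, hence induce the same operator on $K_0$.
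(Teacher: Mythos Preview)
The paper does not prove this lemma at all: it is stated with a citation to \cite[6.1]{CR} and used as a black box. So there is no ``paper's own proof'' to compare against; your proposal is already more than the paper provides.

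Your sketch is along the correct classical lines. The reduction of one identity to the other via adjunction is fine, the unpacking of $D_L$ as a signed sum over $\gamma\preceq\mu$ is correct, transitivity of Harish-Chandra induction is used appropriately, and the Mackey formula for Harish-Chandra functors is indeed available unconditionally (unlike the Deligne--Lusztig version). The one genuine gap is that you stop short of actually carrying out the sign cancellation: the assertion that the $(\lambda,w)$-sum collapses to $(-1)^{|\gamma|}$ via a M\"obius/inclusion--exclusion argument on subsets of $S$ is correct in outcome, but it is exactly the content of the lemma, and you have not written down the bijection or the cancellation that makes it work. In Curtis's original argument this is done by fixing the Levi $M\subseteq L$ and showing that the pairs $(\lambda,w)$ with $L_\mu\cap{}^{w^{-1}}L_\lambda=M$ are indexed by a Boolean interval whose alternating sum vanishes unless $w=1$; if you want the proof to stand on its own you should spell this out. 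Your closing remark that \cite{CR} instead lifts the statement to a homotopy equivalence of bimodule complexes is accurate, and that is the route the paper is implicitly invoking by citing \cite[6.1]{CR}.
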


\bigskip

\section{Modular Alvis-Curtis duality}

In the case of the general linear group $G$ the Alvis-Curtis dual of an irreducible
$KG$-module is well-understood (see below). In the modular case however,
little is known.
For example it is open what the
Alvis-Curtis dual of a simple $kG$-module is. Indeed it
turns out that this question can be transformed into a question on
decomposition matrices of general linear groups. More precisely we
will show that determining the Alvis-Curtis duality on simple
modules for general linear groups in non-describing characteristic
is equivalent to determining the
decomposition matrix of $G$.

\bigskip

  For the convenience of
the reader we recall the result on the simple $KG$-modules (see for example \cite{S}).

\bigskip

\begin{Proposition}\label{DGirred}
Let $S(s,\tilde{\lambda})$ be an irreducible $KG$-module.
Then in the Grothendieck group $K_0(KG)$
$$D_G([S(s,\tilde{\lambda})])= \varepsilon_s [S(s,\tilde{\lambda}'])$$
where $\tilde{\lambda}' = ({\lambda^{(1)}}',\dots,{\lambda^{(r)}}')$
 denotes the conjugate multipartition of $\tilde{\lambda}$ and
 $\varepsilon_s$ is defined as in section \ref{sec22}.
\end{Proposition}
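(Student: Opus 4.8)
The plan is to reduce the statement to the case of a \emph{single} elementary divisor, where the simple module is obtained by Harish-Chandra induction from a single $\GL_{dk}(q)$, and there the module $S(s,\tilde\lambda)$ is $R^G_L$ applied to a tensor product of Specht modules $S(s_i,\lambda^{(i)})$. First I would invoke Lemma~\ref{DualityCommutes}: since $L = \GL_{d_1k_1}(q)\times\dots\times\GL_{d_rk_r}(q)$ is a standard Levi subgroup and $D_L = D_{\GL_{d_1k_1}(q)}\otimes\dots\otimes D_{\GL_{d_rk_r}(q)}$ (this is the multiplicativity of $X_{L}$ recorded just before the lemma), we get
$$
D_G\bigl([S(s,\tilde\lambda)]\bigr) = D_G\circ R^G_L\Bigl[\textstyle\bigotimes_i S(s_i,\lambda^{(i)})\Bigr] = R^G_L\Bigl[\textstyle\bigotimes_i D_{\GL_{d_ik_i}(q)}\bigl(S(s_i,\lambda^{(i)})\bigr)\Bigr].
$$
So it suffices to prove the statement for each factor, i.e.\ to show $D_{\GL_{dk}(q)}([S(\sigma,\mu)]) = \varepsilon_\sigma[S(\sigma,\mu')]$ when $\sigma$ is a single elementary divisor of degree $d$ and $\mu\vdash k$.

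For this reduced case I would use the description of $\chi_{s,\tilde\lambda}$ via Green's parametrization (noted in the excerpt) together with the known behaviour of Alvis-Curtis/Deligne-Lusztig duality on Deligne-Lusztig characters. Concretely, $S(\sigma,\mu)$ corresponds to $R^G_{C_G(\sigma)}$ applied to a unipotent character of $C_G(\sigma)\cong\GL_k(q^d)$ labelled by $\mu$; Alvis-Curtis duality commutes with Lusztig induction up to a sign $\varepsilon_G\varepsilon_{C_G(\sigma)}$, and on the unipotent character $\chi_\mu$ of $\GL_k(q^d)$ it acts (this is the classical Alvis-Curtis result for $\GL$, reducing to the symmetric group via the Iwahori-Hecke algebra and the sign-twist of Section~2.3) as $\pm\chi_{\mu'}$, with the sign $(-1)^{k - (\text{number of parts of }\mu)}$. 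Tracking these signs and checking they collapse to exactly $\varepsilon_\sigma = (-1)^{dk + k}$ as defined in Section~\ref{sec22} is the bookkeeping heart of the argument; the key input is that the $\GL$-case of ordinary Alvis-Curtis duality is a classical theorem, so I only need to transport it through $R^G_L$ and $R^G_{C_G(\sigma)}$ via Lemma~\ref{DualityCommutes} and the analogous statement for Lusztig induction.

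The main obstacle I anticipate is not conceptual but the sign calculation: one must verify that the composite of the sign $\varepsilon_G\varepsilon_L$ coming from moving $D$ past $R^G_L$, the signs $\varepsilon_{\GL_{d_ik_i}(q)}\varepsilon_{C_{\GL_{d_ik_i}(q)}(\sigma_i)}$ from moving $D$ past Lusztig induction inside each factor, and the $(-1)^{k_i-\ell(\lambda^{(i)})}$ from the symmetric-group transpose, multiply to precisely $\varepsilon_s = (-1)^{n+k_1+\dots+k_r}$; the $\ell(\lambda^{(i)})$-terms must cancel because conjugation $\mu\mapsto\mu'$ is involutive and the final answer must be sign-only, and the remaining powers of $-1$ must reorganize into $\varepsilon_s$. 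Alternatively, and more cleanly, I would bypass the explicit sign chase by noting that $D_G$ permutes the irreducible $KG$-characters up to sign (Broué's perfect isometry / the Cabanes-Rickard derived equivalence forces $D_G([S(s,\tilde\lambda)]) = \pm[S(s',\tilde\lambda')]$ for \emph{some} $(s',\tilde\lambda')$), then identify $s' = s$ and $\tilde\lambda' = $ the conjugate by comparing with the known action of ordinary Alvis-Curtis duality computed on a generator of the relevant Harish-Chandra series; the sign is then pinned down by evaluating both sides at the identity (degrees) or by specializing to the unipotent case where it is classical. I would present the proof in the first, direct form, citing \cite{S} or \cite{DM} for the underlying classical statements and using Lemma~\ref{DualityCommutes} as the single new ingredient that makes the reduction work.
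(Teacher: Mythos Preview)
The paper does not actually prove this proposition; it is stated as a known result with a reference to \cite{S} (``For the convenience of the reader we recall the result \dots''). So there is no paper-proof to compare against, and your proposal should be judged on its own merits.

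Your strategy is the standard one and is essentially correct: reduce via Lemma~\ref{DualityCommutes} to a single factor $\GL_{dk}(q)$, then use the Lusztig-series description $S(\sigma,\mu)=\pm R^{\GL_{dk}(q)}_{\GL_k(q^d)}(\hat\sigma\cdot\chi_\mu)$ together with the commutation of Alvis--Curtis duality with Lusztig induction (which introduces the sign $\varepsilon_{\GL_{dk}(q)}\varepsilon_{\GL_k(q^d)}=(-1)^{dk+k}$), and the classical fact that on unipotent characters of $\GL_k(q^d)$ the duality sends $\chi_\mu$ to $\chi_{\mu'}$.

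Two minor corrections to your sign discussion. First, in the paper's normalisation the unipotent Alvis--Curtis dual carries \emph{no} $\mu$-dependent sign: the case $s=1$ of the proposition already reads $\varepsilon_1=(-1)^{n+n}=1$, so your putative $(-1)^{k-\ell(\mu)}$ term should not appear and there is nothing to cancel. Second, the Harish--Chandra step $R^G_L$ with $L=\prod_i\GL_{d_ik_i}(q)$ produces no sign either (both $G$ and $L$ have $\mathbb{F}_q$-rank $n$, so $\varepsilon_G\varepsilon_L=1$; this is precisely why Lemma~\ref{DualityCommutes} has no sign). With these fixes the signs collapse exactly to $\prod_i(-1)^{d_ik_i+k_i}=(-1)^{n+\sum_i k_i}=\varepsilon_s$, as required. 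Finally, note that the commutation of $D_G$ with \emph{Lusztig} induction is not covered by Lemma~\ref{DualityCommutes} (which is Harish--Chandra only) and must be quoted separately, e.g.\ from Digne--Michel; you acknowledge this, but it is the crucial external input, since that step is where the entire nontrivial sign $\varepsilon_s$ originates.
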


\bigskip

Assume that the set of  partitions
$\Lambda^+(n)$ is ordered by a linear order $\leq$ refining the
dominance order, for example the lexicographic order.
Denote by ${\mathcal Z}_G$ the decomposition matrix of $G$ and let
${\mathcal Z}_u = (z_{\lambda,\nu})$ be the upper (quadratic) part of
${\mathcal Z}_G$ restricted to the unipotent block.
So we have that ${\mathcal Z}_u=(z_{\lambda,\nu})$ with
$z_{\lambda,\nu} = [S(1,\lambda) : L(1,\nu)]$ for partitions
$\lambda$, $\nu$ of $n$.

Denote by $A_G = (a_{\lambda,\nu})$ the matrix of the Alvis-Curtis
duality on the unipotent $kG$-modules, that is $a_{\lambda,\nu}$ is
the coefficient of $[L(1,\nu)]$ in $D_G[L(1,\lambda)]$. Finally let
$P$ be the permutation matrix given by the permutation on
$\Lambda^+(n)$ that sends $\lambda$ to $\lambda'$.

\bigskip

Then the decomposition matrix of $G$ determines the Alvis-Curtis
duality and the knowledge of the Alvis-Curtis duality for smaller
general linear groups
 determines the decomposition matrix of $G$. More precisely, we have

\bigskip

\begin{Theorem}\label{decompositionmatrixG}
Let $G=\GL_n(q)$. Then with the notations above

(i) $A_G$ is determined by the decomposition matrix of $G$ and
$A_G$ determines the unipotent part ${\mathcal Z}_u$
of the decomposition matrix
of $G$. Explicitly,
$A_G = {\mathcal Z}_u^{-1} \cdot P \cdot {\mathcal Z}_u$.

(ii) Alvis-Curtis duality on
$K_0(k\GL_{k}(q^{d}))$ for all $d$, $k$
such that $dk \leq n$
determines the whole of the
decomposition matrix ${\mathcal Z}_{G}$ of $G$.

(iii) The decomposition matrix $\mathcal{Z}_{G}$ of $G$ determines
the Alvis-Curtis duality on all irreducible $kG$-modules.
\end{Theorem}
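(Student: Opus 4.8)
The plan is to prove the three parts in order, with part (i) doing most of the work and parts (ii)–(iii) reducing to it by Harish-Chandra induction.

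For part (i), the key identity is $D_G = Q \circ P \circ Q^{-1}$ at the level of the Grothendieck group of the unipotent block, where $Q$ here abbreviates the change-of-basis between simple modules $\{[L(1,\nu)]\}$ and Specht modules $\{[S(1,\lambda)]\}$, i.e. the matrix $\mathcal{Z}_u$. First I would record that, in $K_0$ restricted to the unipotent block, the classes $[S(1,\lambda)]$ for $\lambda \vdash n$ form a $\mathbb{Z}$-basis (they are the ordinary unipotent characters, which are linearly independent and span, and by James the $\ell$-modular reduction of James's lattice has the simples $L(1,\nu)$ as composition factors with multiplicities $z_{\lambda,\nu}$), so that $\mathcal{Z}_u$ is invertible over $\mathbb{Z}$ (it is unitriangular with respect to the order refining dominance, once one knows $z_{\lambda,\lambda}=1$ and $z_{\lambda,\nu}=0$ unless $\nu \unlhd \lambda$). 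Next, by Proposition~\ref{DGirred} with $s=1$ and $\tilde\lambda = (\lambda)$ a single partition, $D_G[S(1,\lambda)] = \varepsilon_1 [S(1,\lambda')]$, and $\varepsilon_1 = (-1)^{n+k_1} = (-1)^{n+n} = 1$ since here $r=1$, $d_1=1$, $k_1=n$; so on the Specht basis $D_G$ acts exactly by the permutation matrix $P$. The crucial commutativity is that $D_G$ is induced by a complex of bimodules and hence is a well-defined $\mathbb{Z}$-linear operator on $K_0$ that does not depend on which basis we express it in; therefore, writing the simple-module matrix $A_G$ and the Specht-module matrix $P$ in the two bases related by $\mathcal{Z}_u$, we get $A_G = \mathcal{Z}_u^{-1} \cdot P \cdot \mathcal{Z}_u$. (One must be slightly careful that $D_G$ genuinely preserves the unipotent block, i.e.\ that $D_G[S(1,\lambda)]$ lies in the span of the unipotent simples; this follows since $D_G[S(1,\lambda)] = [S(1,\lambda')]$ is again a unipotent character, and the unipotent simples are precisely the composition factors of the unipotent characters' reductions.) From $A_G = \mathcal{Z}_u^{-1} P \mathcal{Z}_u$ both implications are immediate: $\mathcal{Z}_u$ determines $A_G$; conversely, given $A_G$, the equation $P \mathcal{Z}_u = \mathcal{Z}_u A_G$ together with the facts that $P$ is known, $\mathcal{Z}_u$ is lower-unitriangular, and $A_G$ is known, determines $\mathcal{Z}_u$ entry by entry (compare coefficients along the linear order $\le$: the diagonal is forced to be $1$, and each subsequent entry is determined by previously computed ones).

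For part (ii), I would argue that the decomposition matrix $\mathcal{Z}_G$ of $G$ decomposes into blocks indexed by conjugacy classes $(s)$ of semisimple $\ell$-regular elements together with the non-$\ell$-regular "twist", but by James's parametrisation (section~\ref{sec22}) each such block is, via Harish-Chandra induction $R^G_L$ with $L = \GL_{d_1 k_1}(q) \times \cdots \times \GL_{d_r k_r}(q)$, a tensor product of the analogous matrices for the smaller groups $\GL_{d_i k_i}(q)$, and moreover $[S(s_i,\lambda^{(i)}):L(s_i,\nu^{(i)})]$ equals the corresponding unipotent decomposition number for $\GL_{k_i}(q^{d_i})$ — this is James's reduction. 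Thus $\mathcal{Z}_G$ is built out of the unipotent parts $\mathcal{Z}_u$ of the decomposition matrices of the various $\GL_{k}(q^{d})$ with $dk \le n$. By part (i) applied to each $\GL_k(q^d)$, knowing Alvis-Curtis duality on $K_0(k\GL_k(q^d))$ is equivalent to knowing that unipotent part. Assembling these gives all of $\mathcal{Z}_G$. The main obstacle here is bookkeeping: one has to be careful that "Alvis-Curtis duality on $K_0(k\GL_k(q^d))$" really refers to the duality operator $D_{\GL_k(q^d)}$ for the group $\GL_k(q^d)$ considered on its own as a group of Lie type, and that this is what enters James's tensor-product description; Lemma~\ref{DualityCommutes} (duality commutes with Harish-Chandra induction) is exactly the tool that lets one pass between $D_G$ on the $R^G_L$-induced module and $D_L = D_{\GL_{d_1k_1}(q)} \otimes \cdots$ on the tensor factors, and then the internal structure of each $\GL_{d_ik_i}(q)$-factor is handled by James's identification with the unipotent situation of $\GL_{k_i}(q^{d_i})$.

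For part (iii), I would run the computation of part (i) in reverse: given the full decomposition matrix $\mathcal{Z}_G$, one in particular knows every unipotent part $\mathcal{Z}_u$ of every $\GL_k(q^d)$ occurring, hence by part (i) one knows $A_{\GL_k(q^d)} = \mathcal{Z}_u^{-1} P \mathcal{Z}_u$ for each of them; then for an arbitrary simple $kG$-module $L(s,\tilde\lambda) = R^G_L(L(s_1,\lambda^{(1)}) \otimes \cdots \otimes L(s_r,\lambda^{(r)}))$, Lemma~\ref{DualityCommutes} gives $D_G[L(s,\tilde\lambda)] = R^G_L(D_L[L(s_1,\lambda^{(1)}) \otimes \cdots]) = R^G_L\big(D_{\GL_{d_1k_1}(q)}[L(s_1,\lambda^{(1)})] \otimes \cdots \otimes D_{\GL_{d_rk_r}(q)}[L(s_r,\lambda^{(r)})]\big)$, and each factor $D_{\GL_{d_ik_i}(q)}[L(s_i,\lambda^{(i)})]$ is computed from the known matrix $A_{\GL_{k_i}(q^{d_i})}$ (again using James's identification with the unipotent case). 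Expanding $R^G_L$ of a tensor product back into the basis of simple $kG$-modules is purely combinatorial (Harish-Chandra induction of simples is known in this setting), so $D_G$ is determined on all simples. The expected main obstacle throughout is not any single deep step but the careful verification that the matrices $P$, $\mathcal{Z}_u$, $A_G$ are taken with respect to compatible orderings of $\Lambda^+(n)$ and that $P$ indeed normalises the unitriangular structure — i.e.\ that $\lambda' \le \mu'$ whenever the relevant entries of $\mathcal{Z}_u$ are nonzero — so that the inductive determination in part (i) actually terminates; this is where the interplay between the order $\le$ refining dominance and conjugation of partitions must be pinned down.
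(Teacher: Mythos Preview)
Your argument for part (i) in the forward direction and for part (ii) matches the paper's. The two places where your proposal diverges are the converse in (i) and the whole of (iii).

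\textbf{Part (i), converse.} Your claim that $P\mathcal{Z}_u=\mathcal{Z}_u A_G$ together with lower unitriangularity of $\mathcal{Z}_u$ ``determines $\mathcal{Z}_u$ entry by entry'' is not justified as stated: the equation relates row $\lambda'$ of $\mathcal{Z}_u$ to row $\lambda$, and there is no obvious induction along the linear order $\le$ that closes up. The actual uniqueness mechanism is the Bruhat decomposition in $\GL_N(\mathbb{Z})$: any invertible matrix has a unique factorisation $U_1\,T\,R\,U_2$ with $U_1,U_2$ lower unitriangular, $T$ diagonal, $R$ a permutation matrix and $RU_2R^{-1}$ \emph{upper} unitriangular. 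Applying this to $A_G=\mathcal{Z}_u^{-1}\cdot P\cdot\mathcal{Z}_u$ one must verify that $P\mathcal{Z}_uP^{-1}$ is upper unitriangular; this follows from $(P\mathcal{Z}_uP^{-1})_{\lambda\mu}=z_{\lambda'\mu'}=0$ unless $\lambda'\unrhd\mu'$, i.e.\ unless $\lambda\unlhd\mu$. Then uniqueness of Bruhat forces $T=1$, $R=P$ and $U_2=\mathcal{Z}_u$. You correctly flagged ``$P$ normalises the unitriangular structure'' as the main obstacle, but the inductive scheme you sketched does not itself furnish uniqueness; Bruhat does.

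\textbf{Part (iii).} The paper's route is much shorter than yours and avoids Harish-Chandra induction entirely: since the upper square part of $\mathcal{Z}_G$ is lower unitriangular (Dipper), every class $[L(s,\tilde\lambda)]$ is an integral combination of Specht classes $[S(s,\tilde\mu)]$, and on these $D_G$ is already known by Proposition~\ref{DGirred}. Your approach---extract from $\mathcal{Z}_G$ the unipotent decomposition matrices of all $\GL_k(q^d)$ with $dk\le n$, apply (i) to each, then reassemble via Lemma~\ref{DualityCommutes}---can be made to work, but it carries extra baggage: you must argue that $\mathcal{Z}_G$ really does encode each of these smaller unipotent matrices (by isolating suitable $\ell$-regular semisimple classes $s$) and that the James identification of the $(s_i)$-series in $\GL_{d_ik_i}(q)$ with the unipotent series of $\GL_{k_i}(q^{d_i})$ intertwines the two duality operators. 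Neither step is hard, but both are unnecessary once you notice that Proposition~\ref{DGirred} already gives $D_G$ on all Specht modules, not just unipotent ones.
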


\bigskip

\begin{proof} {\it (i)}
Proposition \ref{DGirred} holds also for modular Specht modules and thus
$D_G([S(1,\lambda)])=[S(1,\lambda')]$ in $K_0(kG)$. This implies
${\mathcal Z}_u \cdot A_G = P\cdot {\mathcal Z}_u$. Since ${\mathcal Z}_u$ is lower unitriangular (see
\cite[16.3]{J1}) it is
invertible, so $A_G = {\mathcal Z}_u^{-1} P {\mathcal Z}_u$ and $A_G$ is determined by ${\mathcal Z}_G$.

On the other hand, since $A_G$ is invertible we can use the Bruhat decomposition
(see \cite[2.5.13]{Carter}) to write $A_G = U_1\cdot T\cdot R\cdot U_2$
in a unique way,
where $U_1,U_2$ are lower unitriangular, $T$ is a diagonal matrix and
$R$ is a permutation matrix such that $R\cdot
U_2\cdot R^{-1}$ is upper uni-triangular. We already know that ${\mathcal Z}_u$ and
therefore ${\mathcal Z}_u^{-1}$ are lower uni-triangular. Hence
$T=1$, $R=P$ and to complete the proof that $U_2={\mathcal Z}_u$ it remains to show
that $P\mathcal{Z}_uP^{-1}$ is upper unitriangular. By \cite{J1} Cor. 16.3
 we know that $(P{\mathcal Z}_uP^{-1})_{\lambda\mu}=d_{\lambda'\mu'}=0$ unless
$\lambda'\unrhd \mu'$. But this is equivalent to $\lambda \unlhd \mu$ and
therefore $P{\mathcal Z}_uP^{-1}$ is upper unitriangular. So ${\mathcal Z}_u$ is determined by $A_G$ as
the unique lower unitriangular matrix in the Bruhat decomposition of $A_G$.

{\it (ii)}
By the work of Dipper and James \cite{DJ} we know that the upper
(unitriangular) parts of the decomposition matrices of the unipotent blocks of
the groups ${\rm GL}_{k}(q^{d})$ for $kd\leq n$ determine the
decomposition matrix of $G$. So {\it (ii)}
is an easy consequence of part {\it (i)}.

{\it (iii)} The decomposition matrix $\mathcal{Z}_G$ has an upper (quadratic)
part that is lower unitriangular (see \cite{DipperOnDec}). Therefore every
irreducible $kG$-module in $K_0(kG)$ can be written as integral linear
combination of Specht modules. By Proposition
\ref{DGirred} we know the duality on Specht modules. Therefore as in (i) the
decomposion matrix determines the duality operator.
\end{proof}

We would like to point out that the converse to Theorem
\ref{decompositionmatrixG}(iii) is not true in general since the upper
(quadratic) part of the decomposition matrix does not determine the whole of
the decomposition matrix. To calculate the whole decomposition matrix of
$\GL_n(q)$ one needs to know all the quadratic parts of all
decomposition matrices for
all groups $\GL_k(q^d)$ where $dk\leq n$. In terms of Alvis-Curtis duality
this equals the knowledge of the duality operator on all these groups as in
Theorem \ref{decompositionmatrixG}(ii).

\bigskip

\section{Modular Alvis-Curtis duality on Hecke algebras}

Let $\H = \H_k$.
Recall from \cite{LS} the complex $X_\H$ of $\H$-$\H$-bimodules
defined by
$$ X_\H^i = \bigoplus_{\lambda \in \bar{\Lambda}(n-i,n)} \H
\otimes_{\H_\lambda} \H$$
for $0 \leq i \leq n-1$ and with differential
$$d^i = \sum_{\lambda \in \bar{\Lambda}(n-i,n)} \sum_{\stackrel{\nu
    \in \bar{\Lambda}(n-i-1,n)}{\lambda \prec \nu}} (-1)^{\lambda/\nu} \vartheta_{\lambda,\nu}
$$
where $\vartheta_{\lambda,\nu} : \H \otimes_{\H_\lambda} \H \rightarrow
\H \otimes_{\H_\nu} \H$ is the canonical surjection.

\bigskip

Note that this complex can be constructed as the chain complex
associated to a coefficient system on the same simplicial complex as
for $X_G$.

\bigskip

Furthermore, it is shown in \cite{LS} that tensoring with $\H$ induces
a derived self-equivalence of the module category of $\H$, that the
cohomology of $\H$ is concentrated in degree $0$ and that it is
isomorphic to $_\alpha\H$ as an $\H$-$\H$-bimodule.

\bigskip

Therefore $X_\H$ induces a duality operator $D_\H$ in the Grothendieck
group $K_0(\H)$ given by
\begin{equation}\label{Formelreferenz}
D_\H ([V]) = \sum_{\lambda \in \Lambda(n)} (-1)^{|\lambda|}
\Ind_{\mathcal{H}_\lambda}^\mathcal{H}\circ
\Res_{\mathcal{H}_\lambda}^\mathcal{H} [V] = [_\alpha\H \otimes_{\H}
V] = [_\alpha V]
\end{equation}
for $[V] \in K_0(\H)$.

\bigskip

In \cite{S2} it was shown that the complex $X_\H$ is isomorphic to a
quotient of the complex $X_G$, namely to $e_U X_G f e_U$. This
relation on the level of complexes is reflected on the level of the
Grothendieck groups in the following form. Denote by $A_\H=(h_{\lambda,
  \nu})$ the matrix whose entries are given by the coefficients of
$[L^\nu]$ in a decomposition of $D_\H([L^\lambda])$ for $\lambda, \nu$
$e$-regular partitions of $n$.

\bigskip

\begin{Theorem}\label{Bernd1}
The matrix $A_G=(a_{\lambda, \nu})$ giving the Alvis-Curtis duality completely
determines the matrix $A_\H$. More precisely, for $\lambda, \nu$
$e$-regular partitions of $n$ we have
$$
a_{\lambda,\nu} = h_{\lambda,\nu} = \left\{
\begin{array}{ll}
1 & \mbox{if} \;\;  \nu = m(\lambda) \\
0 & \mbox{otherwise} \\
\end{array} \right.
$$
\end{Theorem}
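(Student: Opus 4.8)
The plan is to prove the two equalities $a_{\lambda,\nu}=h_{\lambda,\nu}$ and the explicit description of $h_{\lambda,\nu}$ in turn, with the Mullineux map entering only through the last ingredient recalled in the Hecke algebra subsection.

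First I would establish the formula for $A_\H$. By equation \eqref{Formelreferenz}, $D_\H([L^\lambda])=[{}_\alpha(L^\lambda)]$ in $K_0(\H_k)$, and by the result of \cite{Br} recalled above, ${}_\alpha(L^\lambda)=L^{m(\lambda)}$ as $\H_k$-modules whenever $\lambda$ is $e$-regular. Hence $D_\H([L^\lambda])=[L^{m(\lambda)}]$, which is exactly the statement that $h_{\lambda,\nu}=1$ if $\nu=m(\lambda)$ and $0$ otherwise. So this half is essentially a citation assembled from pieces already in the paper.

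The substantive part is $a_{\lambda,\nu}=h_{\lambda,\nu}$, i.e.\ that modular Alvis-Curtis duality on the unipotent simple $kG$-modules matches the Hecke algebra duality on simple $\H_k$-modules under the labelling by $e$-regular partitions. Here I would use the Schur/Dipper--James functor. Recall from \cite{S2} that the complex $X_\H$ is isomorphic to $e_U X_G f e_U$, so applying $e_U(-)$ (equivalently, tensoring with the $(\H,\C Gf)$-bimodule realising the Dipper--James functor) intertwines the complex $X_G f$ with $X_\H$ up to isomorphism. Consequently, on Grothendieck groups, the Schur functor intertwines $D_G$ (restricted to the unipotent block) with $D_\H$. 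Now the Dipper--James functor sends the simple $kG$-module $L(1,\lambda)$ to $L^\lambda$ when $\lambda$ is $e$-regular and to $0$ otherwise, and it sends the modular Specht module $S(1,\lambda)$ to the Specht module $S^\lambda$ of $\H_k$; moreover it is exact, so it induces a well-defined map of Grothendieck groups compatible with decomposition numbers. Chasing $[L(1,\lambda)]$ through the square
$$
\begin{CD}
K_0(kGf) @>{D_G}>> K_0(kGf) \\
@VVV @VVV \\
K_0(\H_k) @>{D_\H}>> K_0(\H_k)
\end{CD}
$$
gives, on the one hand, $D_G([L(1,\lambda)])=\sum_\nu a_{\lambda,\nu}[L(1,\nu)]$ mapping to $\sum_{\nu\ \!e\text{-reg}} a_{\lambda,\nu}[L^\nu]$, and on the other hand $[L(1,\lambda)]\mapsto[L^\lambda]\mapsto D_\H([L^\lambda])=\sum_\nu h_{\lambda,\nu}[L^\nu]$. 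Comparing coefficients on the basis $\{[L^\nu]\}$ of $K_0(\H_k)$ yields $a_{\lambda,\nu}=h_{\lambda,\nu}$ for all $e$-regular $\lambda,\nu$, provided one checks that no $a_{\lambda,\mu}$ with $\mu$ not $e$-regular contributes (it doesn't, since such $L(1,\mu)$ die under the Schur functor).

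The main obstacle I anticipate is making the compatibility of $D_G$ with the Schur functor rigorous rather than merely plausible: one must verify that the isomorphism $X_\H\cong e_U X_G f e_U$ from \cite{S2} is genuinely an isomorphism of complexes of bimodules in a way that commutes with the differentials $d^i$ (matching $\alpha_{\lambda,\nu}$ with $\vartheta_{\lambda,\nu}$ and the signs $(-1)^{\lambda/\nu}$), so that tensoring the derived self-equivalence $X_G f\otimes_{kGf}-$ with the Dipper--James bimodule really does yield $X_\H\otimes_{\H}-$; and that passing to Grothendieck groups is legitimate because the Schur functor is exact and surjective on the relevant $K_0$'s. Once that intertwining is in place the coefficient comparison is routine linear algebra, and the Mullineux description follows immediately from \cite{Br} as above.
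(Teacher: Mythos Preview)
Your proposal is correct and follows essentially the same strategy as the paper: intertwine $D_G$ and $D_\H$ via a functor between the two module categories, then compare coefficients on simples, and read off the Mullineux description from formula~\eqref{Formelreferenz} together with \cite{Br}. The difference is in the direction and the technical device. You work with the Schur functor $e_U\cdot(-):kGf\text{-mod}\to\H_k\text{-mod}$ and appeal to the complex-level isomorphism $X_\H\cong e_UX_Gfe_U$ of \cite{S2} to obtain the intertwining; the paper instead goes the other way, using the functor $\beta=M\otimes_\H-$ with $M=\hci^G_T(k)$, and establishes the intertwining degree by degree from the elementary functor identity $\hci^G_{L_\lambda}\circ\hcr^G_{L_\lambda}\circ\beta\cong\beta\circ\Ind^\H_{\H_\lambda}\circ\Res^\H_{\H_\lambda}$ (a direct consequence of \cite[Cor.~3.2(g)]{BDK}). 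Summing over $\lambda$ with the appropriate signs gives $D_G\circ\beta=\beta\circ D_\H$ on $K_0$ without ever touching the complexes or their differentials, and since $\beta(L^\lambda)\cong L(1,\lambda)$ the coefficient comparison is immediate. The paper's route buys you exactly the simplification you flagged as your main obstacle: there is no need to check that the signs and differentials match under the isomorphism of \cite{S2}, and because $\beta$ sends simples to simples (rather than possibly to zero) there is no bookkeeping about non-$e$-regular terms dying. Your route has the virtue of making the relation to the complex $X_\H\cong e_UX_Gfe_U$ explicit, which fits well with the theme of the surrounding sections.
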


\bigskip

For the proof of theorem~\ref{Bernd1} we need the following lemma,
which is a direct consequence of \cite{BDK}, Corollary 3.2(g) that:

\begin{Lemma}\label{Bernd2}
Let $M$ denote the permutation module $M= \hci^G_T(k)$. Then
the following diagram commutes:
\begin{equation*}
\begin{CD}
\mathcal{H}-mod @>{M\otimes_\mathcal{H}\_}>> kG-mod \\
@V{\Ind^\mathcal{H}_{\mathcal{H}_\lambda}\circ
\Res_{\mathcal{H}_\lambda}^\mathcal{H}}VV
@VV{\hci^G_{L_\lambda}\circ\hcr^G_{L_\lambda}}V  \\
\mathcal{H}-mod @>{M\otimes_\mathcal{H}\_}>> kG-mod
\end{CD}
\end{equation*}
\end{Lemma}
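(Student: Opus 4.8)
The statement to prove is Lemma~\ref{Bernd2}: that the square with horizontal arrows $M\otimes_{\H}-$ and vertical arrows $\Ind^{\H}_{\H_\lambda}\circ\Res^{\H}_{\H_\lambda}$ on the left and $\hci^G_{L_\lambda}\circ\hcr^G_{L_\lambda}$ on the right commutes. My plan is to reduce the commutativity of this composite square to a more basic pair of compatibilities, one for $\hcr$ versus $\Res$ and one for $\hci$ versus $\Ind$, and to extract both from the bimodule description of the Dipper--James/Brundan--Dipper--Kleshchev setup.

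First I would set up the identifications. Recall $M=\hci^G_T(k)=kGe_U\otimes_{kT}k=kGe_U$ (as $e_T k=k$), and $e=\Phi^1_{(1^n),(1^n)}$ satisfies $e\Sch(n)e\cong\H$, while by \cite[3.2(g)]{BDK} the functor $M\otimes_{\H}-$ is (up to the Morita equivalence between $\Sch_k(n)$ and $C_k(n)=kG/I$, and the identification $C_k(n)$-mod $\simeq$ $kGf$-mod) nothing but the composite of the Schur-type functor $\Sch(n)e\otimes_{e\Sch(n)e}-$ with the bimodule $Q\otimes_{\Sch(n)}-$. The key structural input of \cite[3.2(g)]{BDK} is that Harish-Chandra induction and restriction for $kG$ correspond, under this dictionary, to the induction/restriction functors $\Sch(n)e_\lambda\otimes_{\Sch(\lambda)}-$ and $e_\lambda\cdot-$ between $\Sch(n)$ and $\Sch(\lambda)$, and that these in turn restrict, via the Schur functor $e\cdot-$, to $\Ind^{\H}_{\H_\lambda}$ and $\Res^{\H}_{\H_\lambda}$ on the Hecke side — this is exactly the compatibility $e\Sch(\lambda)e\cong\H_\lambda$ with $e_\lambda e=e$. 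Thus I would phrase the proof as: the diagram
\begin{equation*}
\begin{CD}
\H\text{-mod} @>{M\otimes_{\H}-}>> kGf\text{-mod}\\
@V{\Res^{\H}_{\H_\lambda}}VV @VV{\hcr^G_{L_\lambda}}V\\
\H_\lambda\text{-mod} @>{M_\lambda\otimes_{\H_\lambda}-}>> kL_\lambda f_\lambda\text{-mod}
\end{CD}
\end{equation*}
commutes (with $M_\lambda=\hci^{L_\lambda}_T(k)$ the analogous permutation module for the Levi), and likewise the diagram with $\Ind$ and $\hci$ going back up commutes; pasting the two diagrams vertically gives the claim. The commutativity of each small square is what \cite[3.2(g)]{BDK} delivers once one unwinds that both vertical composites come from tensoring with the \emph{same} $(kL_\lambda,kG)$- and $(kG,kL_\lambda)$-bimodules $e_{U_\lambda}kG$ and $kGe_{U_\lambda}$, which on the Schur side are $e_\lambda\Sch(n)$ and $\Sch(n)e_\lambda$, and on the Hecke side restrict to the parabolic induction/restriction bimodules.

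The step I expect to be the main obstacle is bookkeeping rather than conceptual: carefully tracking the three Morita equivalences in play (between $\Sch_k(n)$ and $C_k(n)$, between $C_k(n)$ and $kGf$, and the embedding $\Sch(\lambda)\hookrightarrow e_\lambda\Sch(n)e_\lambda$) and verifying that the bimodule $Q$ of \cite{BDK} is compatible with \emph{all} the parabolic data simultaneously, i.e.\ that $e_{U_\lambda}Q=Q_\lambda$ in the appropriate sense and that the associativity isomorphisms for the various tensor products match up without a sign or a twist. Once the dictionary "$\hcr^G_{L_\lambda}\leftrightarrow e_\lambda\cdot-\leftrightarrow\Res^{\H}_{\H_\lambda}$" and its adjoint are in place — which is precisely the content of \cite[3.2(g)]{BDK} — the lemma follows, since $\Ind\circ\Res$ and $\hci\circ\hcr$ are then the two ways around a diagram that has already been shown to commute edge by edge. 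I would therefore present the proof as a short deduction from \cite[3.2(g)]{BDK}, spelling out only the identification $M\otimes_{\H}-\cong Q\otimes_{\Sch(n)}(\Sch(n)e\otimes_{e\Sch(n)e}-)$ and the two edge-commutativities, and leaving the naturality verifications to the reference.
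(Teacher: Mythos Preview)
Your proposal is correct and takes essentially the same approach as the paper: the paper's proof is the single sentence ``This is Cor.~3.2(g) of \cite{BDK},'' and your argument is just a careful unpacking of what that corollary says and why it yields the two edge-commutativities whose vertical pasting gives the square. There is nothing missing; if anything, you are supplying more detail than the paper does.
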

\begin{proof}
This is Cor. 3.2(g) of \cite{BDK}.
\end{proof}

\bigskip

{\it Proof of Theorem~\ref{Bernd1}.} Let $\beta$ denote the functor $M\otimes_\mathcal{H}\_$. Since
  $\beta$ is exact Lemma \ref{Bernd2} gives us $D_G\circ\beta = \beta\circ
  D_\mathcal{H}$ on the level of Grothendieck groups. And since
  $\beta(L^\lambda)\cong L(1,\lambda)$ we have
\begin{align*}
\sum_\mu a_{\lambda,\mu} [L(1,\mu)]&=D_G\circ\beta([L^\lambda]) \\
&=\beta\circ
D_{\mathcal{H}}([L^\lambda])=\beta(\sum_\mu h_{\lambda,\mu}[L^\mu]=\sum_{\mu}h_{\lambda,\mu}[L(1,\mu)]
\end{align*}
which proves the first equality.
The second equality follows directly from formula~(\ref{Formelreferenz}).

\hfill$\Box$

\bigskip

\section{Modular Alvis-Curtis duality on $q$-Schur algebras}

We define a duality operator on the Grothendieck group of
$\Sch(n)$-modules induced by a derived self-equivalence of the module
category given by tensoring with a complex.
This duality operator is closely linked to the Alvis-Curtis
duality operator on the level of the Grothendieck group as well as
on the level of the derived category.
Furthermore, we show how this complex relates to the complex $X_\H$
for  the  Hecke algebra defined in the previous section.

\bigskip

Let $\Sch(n)=\Sch_R(n)$ be the $q$-Schur algebra defined over $R \in \{\O,
k\}$.
Define the cochain complex $X_{\mathcal S}$ of
$\Sch(n)$-$\Sch(n)$-bimodules as the cochain complex associated to
the coefficient system on the simplicial complex of compositions of
$n$ with partial order $\preceq$. A composition $\lambda$ of $n$
is sent to the $\Sch(n)$-$\Sch(n)$-bimodule $\Sch(n)e_\lambda \otimes_{S(\lambda)}
e_\lambda \Sch(n)$ and the inclusion $\lambda \prec \nu $ is sent to
the map
$$ \begin{array}{ccccc} \gamma_{\lambda,\nu} &:& \Sch(n)e_\lambda \otimes_{S(\lambda)}
e_\lambda \Sch(n) & \rightarrow & \Sch(n)e_\nu \otimes_{S(\nu)}
e_\nu \Sch(n)\\
&& x\otimes y & \mapsto & x\otimes y.\\
\end{array}$$
Note that for $\lambda$, $\nu$ as above, $e_\nu e_\lambda = e_\lambda
e_\nu = e_\nu$ thus the maps are well-defined. Hence
after shifting and renumbering the complex $X_{\mathcal S}$ is concentrated
in degrees $0$ to $n-1$ and for $0 \leq i \leq n-1$
$$X_{\mathcal S}^i = \bigoplus_{\lambda \in \bar{\Lambda(n-i,n)}}
\Sch(n)e_\lambda \otimes_{\Sch(\lambda)} e_\lambda \Sch(n).$$
The differential is given by
$$d^i = \sum_{\lambda \in \bar{\Lambda}(n-i,n)} \sum_{\stackrel{\nu
    \in \bar{\Lambda}(n-i-1,n)}{\lambda \prec \nu}} (-1)^{\lambda/\nu} \gamma_{\lambda,\nu}.
$$

\bigskip

Then  $X_\Sch$ induces a derived self-equivalence and a duality
operation in the Grothendieck group of the module category of $\Sch(n)$.

\bigskip

\begin{Theorem}\label{qSchur}
The functor $X_{\mathcal S} \otimes_{\Sch(n)} - : {\mathcal D}^\flat(\Sch(n)) \to {\mathcal
  D}^\flat(\Sch(n))$ is an equivalence of categories inducing a
  duality operator $D_{\Sch}$ of the Grothendieck group
$K_0(\Sch_k(n))$ with the following properties

(i) $D_{\Sch}(\Delta(\lambda)) = \Delta(\lambda')$ for all
standard modules $\Delta(\lambda)$.

(ii) $D_{\Sch}(\nabla(\lambda)) =
\nabla(\lambda')$ for all
costandard modules $\nabla(\lambda)$.

(iii) $D_{\Sch}(L(\lambda)) = \sum_{\mu \in \Lambda^+(n)}
a_{\lambda'\mu'} L(\mu)$, where $A_G=(a_{\lambda\mu})$ is the matrix
given by the Alvis-Curtis duality.
 \end{Theorem}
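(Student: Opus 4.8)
The plan is to proceed in three stages: first establish the derived equivalence, then the action on standard and costandard modules, and finally transfer the action on simples from $G$ via the Morita/Schur-functor comparisons already set up in Section~\ref{sec2}.

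\emph{The derived equivalence.} I would argue exactly as Cabanes--Rickard do for $X_G$: the complex $X_{\mathcal S}$ is built from a coefficient system on the same simplicial complex (compositions of $n$ under $\preceq$) as $X_G$ and $X_\H$, and the terms $\Sch(n)e_\lambda\otimes_{\Sch(\lambda)}e_\lambda\Sch(n)$ are projective as bimodules on each side (since $e_\lambda\Sch(n)$ and $\Sch(n)e_\lambda$ are projective one-sided modules and $\Sch(\lambda)$ is a subalgebra with $\Sch(n)e_\lambda$, $e_\lambda\Sch(n)$ projective over it). To show $X_{\mathcal S}\otimes_{\Sch(n)}-$ is a self-equivalence of ${\mathcal D}^\flat(\Sch(n))$ it suffices, by the general nonsense on complexes of projective bimodules, to show that $X_{\mathcal S}$ is isomorphic in the derived category of bimodules to an invertible bimodule; concretely I would show its total cohomology is concentrated in one degree and is an invertible $\Sch(n)$-$\Sch(n)$-bimodule. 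Here I would exploit the relation to $X_\H$: applying the Schur functor $e=\Phi^1_{(1^n),(1^n)}$ (with $e\Sch(n)e\cong\H$) sends $X_{\mathcal S}$ to $X_\H$, whose cohomology is known from \cite{LS} to be concentrated in degree $0$ and equal to ${}_\alpha\H$. Since $\Sch(n)$ is quasi-hereditary and $X_{\mathcal S}$ commutes with the generalized Schur functors $e_\nu.-$ by construction (the maps $\gamma_{\lambda,\nu}$ are the obvious ones and $e_\nu e_\lambda=e_\nu$), a dévissage along the $\Delta$-filtration reduces the computation of $H^\bullet(X_{\mathcal S})$ to the known Hecke-algebra case on a cell layer, forcing the cohomology into degree $0$ with an invertible bimodule there. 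The main obstacle is making this dévissage precise: showing that applying $X_{\mathcal S}\otimes_{\Sch(n)}-$ to $\Delta(\lambda)$ already yields a complex with cohomology in degree $0$, which is really \emph{(i)} below, so \emph{(i)} and the derived-equivalence claim should be proved together.

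\emph{Parts (i) and (ii).} For \emph{(i)} I would compute $X_{\mathcal S}\otimes_{\Sch(n)}\Delta(\lambda)$ directly. The terms become $\Sch(n)e_\nu\otimes_{\Sch(\nu)}e_\nu\Delta(\lambda)$, i.e.\ $\Sch(n)e_\nu\otimes_{\Sch(\nu)}-$ applied to the $\Sch(\nu)$-module $e_\nu\Delta(\lambda)$, which by the theory of these generalized Schur functors (\cite{BDK}, section 4.2) has a known standard-filtration in terms of Littlewood--Richardson data; these are precisely the $q$-analogues of Harish-Chandra induction/restriction of unipotent standard modules. The resulting complex is, term by term, the image under $Q\otimes_{\Sch(n)}-$ of the unipotent part of the Alvis--Curtis complex $X_G\otimes_{\O Gf}-$ applied to the ordinary Specht module $S_{\mathcal O}(1,\lambda)$; by Proposition~\ref{DGirred} (in the form used in the proof of Theorem~\ref{decompositionmatrixG}, valid for modular Specht/standard modules since everything is flat over $\mathcal O$ and the character identity determines the class) this has cohomology concentrated in degree $0$ equal to $\Delta(\lambda')$ up to sign; the sign $\varepsilon_1=(-1)^{n+n}=1$. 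For \emph{(ii)}, apply the contravariant duality $\circ$ on $\Sch_k(n)$-mod that fixes simples and swaps $\Delta(\lambda)\leftrightarrow\nabla(\lambda)$: one checks $X_{\mathcal S}$ is (up to the evident transpose of bimodule sides and a degree reversal that is absorbed) self-dual under this duality because $\gamma_{\lambda,\nu}$ is transposed to the canonical inclusion again, so $D_{\mathcal S}$ commutes with $\circ$ on the Grothendieck group and \emph{(ii)} follows from \emph{(i)}.

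\emph{Part (iii).} Write $[L(\lambda)]=\sum_\mu z_{\mu\lambda}^{-1}[\Delta(\mu)]$ in $K_0(\Sch_k(n))$ using that the decomposition matrix $({\mathcal Z}_u)$ of $\Sch_k(n)$ (which coincides with the unipotent part of ${\mathcal Z}_G$) is lower unitriangular, hence invertible over $\mathbb Z$. Apply $D_{\mathcal S}$ and use \emph{(i)}: $D_{\mathcal S}[L(\lambda)]=\sum_\mu z_{\mu\lambda}^{-1}[\Delta(\mu')]=\sum_\mu z_{\mu\lambda}^{-1}\sum_\kappa z_{\mu'\kappa}[L(\kappa)]$. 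Comparing with the matrix identity $A_G={\mathcal Z}_u^{-1}P{\mathcal Z}_u$ from Theorem~\ref{decompositionmatrixG}(i) and relabelling (the permutation $P$ being $\lambda\mapsto\lambda'$), the coefficient of $[L(\mu)]$ is exactly $a_{\lambda'\mu'}$, which is the claim. The one point needing care is the indexing convention: that the decomposition numbers of the $q$-Schur algebra on unipotent standard modules agree with those of $\GL_n(q)$ on ordinary Specht modules under the Morita equivalence $Q\otimes_{\Sch(n)}-$, which is exactly \cite{T,BDK} together with $C_{\mathcal O}(n)\cong{\mathcal O}Gf$ recalled in Section~\ref{sec2}. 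This identification is what lets the purely combinatorial computation in Theorem~\ref{decompositionmatrixG}(i) be imported verbatim, so I would cite it explicitly rather than re-derive it.
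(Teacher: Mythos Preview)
Your treatment of part~(iii) is essentially the paper's: expand $[L(\lambda)]$ in the $\Delta$-basis, apply~(i), and invoke $A_G=\Z_u^{-1}P\Z_u$ from Theorem~\ref{decompositionmatrixG}(i). One indexing point to watch: under the Morita equivalence with $\O Gf$ the paper has $\Delta(\lambda)\leftrightarrow S(1,\lambda')$ (conjugate partition) and $\nabla(\lambda)\leftrightarrow S(1,\lambda)^*$, so the decomposition matrix of $\Sch_k(n)$ is $P\Z_u P^{-1}$, not $\Z_u$. Your final formula $a_{\lambda'\mu'}$ is correct once this is tracked through.

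Where you diverge from the paper, and where there is a genuine gap, is the derived equivalence. Your d\'evissage via the Schur functor and $\Delta$-filtrations is not complete: knowing $eX_{\mathcal S}e\simeq X_\H$ has cohomology in degree~$0$ says nothing directly about $X_{\mathcal S}$, since $e\cdot-$ kills a large part of the category; and reducing to the action on each $\Delta(\lambda)$ is circular, as you note, because that is~(i). Even granting~(i) for every $\lambda$, promoting ``sends generators to generators'' to ``is a two-sided tilting complex'' is not automatic and would need a separate fully-faithfulness argument.

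The paper sidesteps all of this with one bimodule-level identification. It first proves a lemma that $e_\lambda\cdot-$ on $\Sch(n)$-modules corresponds under the Morita bimodule $Q$ to Harish-Chandra restriction $\hcr^G_{L_\lambda}$ (the companion to the known fact from \cite{BDK} that $\Sch(n)e_\lambda\otimes_{\Sch(\lambda)}-$ corresponds to $R^G_{L_\lambda}$). From this it follows functorially that
\[
X_{\mathcal S}\;\cong\; Q'\otimes_{\O Gf} fX_Gf\otimes_{\O Gf} Q
\]
as complexes of $\Sch_\O(n)$-bimodules. Since $fX_Gf\otimes_{\O Gf}-$ is already a self-equivalence of $\mathcal D^\flat(\O Gf)$ by \cite{S2}, and Morita bimodules transport derived equivalences, the equivalence over $\O$ is immediate; \cite[2.2]{R} gives the passage to $k$. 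Parts~(i) and~(ii) then fall out of Proposition~\ref{DGirred} with no further computation and no appeal to the contravariant duality. Your own Part~(i) paragraph already contains this idea (``term by term, the image under $Q\otimes_{\Sch(n)}-$ of the unipotent part of the Alvis--Curtis complex''), but applied one module at a time; lifting it once and for all to the bimodule complex is both cleaner and what actually closes the gap.
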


\bigskip

\begin{Theorem}
There is a split monomorphism of complexes of $\H_R$-$\H_R$-bimodules
from $X_\H$ into $ eX_\Sch e$.
\end{Theorem}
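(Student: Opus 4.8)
The statement to prove is that there is a split monomorphism of complexes of $\H_R$-$\H_R$-bimodules from $X_\H$ into $eX_\Sch e$. The natural plan is to analyse the two complexes term-by-term, exhibit a degreewise map, check that it commutes with the differentials, and then check that it is a split mono in each degree. Recall that $e = \Phi^1_{(1^n),(1^n)}$ is the idempotent realising the Schur functor, with $e\Sch(n)e \cong \H$. So applying $e(-)e$ to the term $\Sch(n)e_\lambda\otimes_{\Sch(\lambda)}e_\lambda\Sch(n)$ of $X_\Sch$ in degree $i$ (for $\lambda\in\bar\Lambda(n-i,n)$) gives $e\Sch(n)e_\lambda\otimes_{\Sch(\lambda)}e_\lambda\Sch(n)e$, and the plan is to identify this, as an $\H$-$\H$-bimodule, with $\H\otimes_{\H_\lambda}\H$ together with a complementary summand. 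The key local computation is therefore the identification of $e\Sch(n)e_\lambda$ as a right $\Sch(\lambda)$-module: since $e$ factors through $e_\lambda$ (as $e\,e_\lambda = e$ because the trivial composition $(1^n)$ refines $\lambda$, i.e. $(1^n)\in\Lambda(\lambda)$), one has $e\Sch(n)e_\lambda = e\Sch(n)e_\lambda$ and, via the embedding $\Sch(\lambda)=\Sch_R(\lambda_1)\otimes\cdots\otimes\Sch_R(\lambda_a)\hookrightarrow e_\lambda\Sch(n)e_\lambda$, the idempotent $e$ lies in $\Sch(\lambda)$ and corresponds there to the product of the Schur-functor idempotents $e_j\in\Sch_R(\lambda_j)$; hence $e\Sch(\lambda)e\cong \H_{\lambda_1}\otimes\cdots\otimes\H_{\lambda_a}=\H_\lambda$.

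Carrying this out, I would first fix, for each composition $\lambda$, the decomposition $e\Sch(n)e_\lambda = e\Sch(\lambda) \oplus (\text{rest})$ as right $\Sch(\lambda)$-modules — more precisely, I expect that $e\Sch(n)e_\lambda\otimes_{\Sch(\lambda)}e_\lambda\Sch(n)e$ contains $e\Sch(\lambda)\otimes_{\Sch(\lambda)}\Sch(\lambda)e = e\Sch(\lambda)e\cong\H_\lambda$-bimodule-version, which after the identification $e\Sch(n)e\cong\H$ becomes $\H\otimes_{\H_\lambda}\H$ sitting inside it as a bimodule direct summand. To get this cleanly I would use the fact, from \cite{BDK} section 4.2, that $\Sch(\lambda)$ is a subalgebra of $e_\lambda\Sch(n)e_\lambda$ with $e_\lambda\Sch(n)e_\lambda$ free (projective) over $\Sch(\lambda)$ on both sides, so that all the relevant Hom/tensor identifications are exact and the natural inclusion of the "diagonal $\Sch(\lambda)$-piece" is split. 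Concretely: $eX_\Sch^i e = \bigoplus_{\lambda\in\bar\Lambda(n-i,n)} e\Sch(n)e_\lambda\otimes_{\Sch(\lambda)}e_\lambda\Sch(n)e$, and I would produce for each $\lambda$ a bimodule map $\H\otimes_{\H_\lambda}\H = e\Sch(n)e\otimes_{e\Sch(\lambda)e}e\Sch(n)e \to e\Sch(n)e_\lambda\otimes_{\Sch(\lambda)}e_\lambda\Sch(n)e$ which is a split injection, plus a retraction. The summed-up map $\iota:X_\H\to eX_\Sch e$ is then a degreewise split mono.

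The remaining point is compatibility with the differentials. Both $d^i_\H$ and the restriction of $d^i_\Sch$ to $eX_\Sch e$ are sums $\sum (-1)^{\lambda/\nu}(\text{structure map induced by }e_\nu e_\lambda = e_\nu \text{ resp. } \H_\nu\supseteq\H_\lambda)$; the signs $(-1)^{\lambda/\nu}$ are the same combinatorial data in both cases since the simplicial complex (compositions of $n$ ordered by $\preceq$) is literally the same, as the paper already notes for $X_\H$ versus $X_G$. So commutativity of $\iota$ with $d$ reduces to commutativity, for each pair $\lambda\prec\nu$, of the square relating $\vartheta_{\lambda,\nu}$ on $\H\otimes_{\H_\lambda}\H\to\H\otimes_{\H_\nu}\H$ with $\gamma_{\lambda,\nu}$ restricted to the image of $\iota$; since both are induced by the identity on elements under the various $e_\nu$-sandwich identifications, this is a direct check. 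For the retraction to be a map of complexes one checks the analogous square for the chosen retractions, which follows because the complement-summand is characterised functorially (as $e\Sch(n)(e_\lambda - e)\Sch(n)e$-type pieces) and the structure maps respect it.

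**Main obstacle.** The delicate step is establishing that the "diagonal" copy of $\H_\lambda$ really splits off inside $e\Sch(n)e_\lambda\otimes_{\Sch(\lambda)}e_\lambda\Sch(n)e$ as an $\H$-$\H$-bimodule in a way compatible with all the $\gamma_{\lambda,\nu}$ simultaneously — i.e. choosing the splittings coherently across all $\lambda$ so that both $\iota$ and its retraction are genuine chain maps. The individual term-level splitting is straightforward from projectivity of $e_\lambda\Sch(n)e_\lambda$ over $\Sch(\lambda)$ (hence of $\Sch(n)e_\lambda$, and of $e\Sch(n)e_\lambda = e\Sch(\lambda)e_\lambda \oplus \cdots$), but making the family of retractions commute with the differential requires identifying the complementary summand canonically rather than by an arbitrary choice. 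I expect this is handled by observing that $e$ is a sum of primitive idempotents all of which factor through every $e_\lambda$, so $e\Sch(n)e_\lambda\otimes_{\Sch(\lambda)}e_\lambda\Sch(n)e$ splits canonically according to whether the "middle" $\Sch(\lambda)$-action is through $e\Sch(\lambda)e\cong\H_\lambda$ or through its complement $e\Sch(\lambda)e^{\perp}$, and this canonical splitting is automatically preserved by the $e_\nu$-sandwich maps since $e$ commutes with all the $e_\lambda, e_\nu$. Once this canonicity is in place, commutativity with $d^i$ on both $\iota$ and its retraction is routine from the definitions of $\vartheta_{\lambda,\nu}$ and $\gamma_{\lambda,\nu}$, both being "identity on representatives".
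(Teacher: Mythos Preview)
Your approach is essentially the paper's: identify $e\Sch(n)e\cong\H$ and $e\Sch(\lambda)e\cong\H_\lambda$ (the paper cites \cite[4.6(5)]{D} for the latter), then exhibit the injection $x\otimes y\mapsto x\otimes y$ from $e\Sch(n)e\otimes_{e\Sch(\lambda)e}e\Sch(n)e$ into $e\Sch(n)e_\lambda\otimes_{\Sch(\lambda)}e_\lambda\Sch(n)e$ and check it is a split chain map. Your ``main obstacle''---choosing the splittings coherently across all $\lambda$---is dissolved in the paper by simply writing down the explicit retraction $x\otimes y\mapsto xe\otimes ey$, which uses only the fixed idempotent $e$ (lying in every $\Sch(\lambda)$) and is therefore automatically compatible with every $\gamma_{\lambda,\nu}$; no projectivity arguments or abstract choices of complement are needed.
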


\bigskip

\begin{proof}
Let $\H = \H_R$ and recall that $e \Sch(n) e \cong \H$. By \cite[4.6(5)]{D} the idempotent $e$
also satisfies $e \Sch(\lambda) e \cong \H_\lambda$ for all
compositions $\lambda$ of $n$. Therefore the terms of the complex
$X_\H$ are isomorphic to direct sums of terms of the form
$e\Sch(n) e \otimes_{e\Sch(\lambda)e} e
\Sch(n) e$. The map defined by
$$
\begin{array} {ccccc}
 &  & e\Sch(n) e_\lambda \otimes_{\Sch(\lambda)} e_\lambda \Sch(n) e
& \rightarrow & e\Sch(n) e \otimes_{e\Sch(\lambda)e} e
\Sch(n) e \\
&& x \otimes y & \mapsto & xe \otimes ey \\
\end{array}
$$
induces then a morphism of complexes. It is a splitting map for the
injection inducing a monomorphism of complexes
$$
\begin{array} {ccc}
 e\Sch(n) e \otimes_{e\Sch(\lambda)e} e
\Sch(n) e & \rightarrow &e\Sch(n) e_\lambda \otimes_{\Sch(\lambda)}
e_\lambda \Sch(n) e \\
x \otimes y & \mapsto & x \otimes y. \\
\end{array}
$$
\end{proof}

Before proving theorem \ref{qSchur} let us give some preliminary results.

\bigskip

By \cite[4.2c]{BDK} the functor $\Sch(n)e_\lambda
\otimes_{\Sch(\lambda)} - $ corresponds to Harish-Chandra
induction in the general linear group. More precisely, if $M$ is an
$\Sch(\lambda)$-$\Sch(n)$-bimodule then there is an ismorphism
of $C(n)$-$\Sch(n)$-bimodules
$$
R_{L_\lambda }^G(Q_\lambda \otimes_{\Sch(\lambda)} M) \cong Q
\otimes_{\Sch(n)} \Sch(n) e_\lambda \otimes_{\Sch(\lambda)} M \cong  Q e_\lambda \otimes_{\Sch(\lambda)} M.
 $$
Here $Q_\lambda$ is the $\Sch(\lambda)$-$R L_\lambda$ bimodule inducing
a Morita equivalence between $\Sch(\lambda)$ and $C(\lambda)$.
That is the following diagram commutes
\begin{equation*}
\begin{CD}
\Sch(\lambda)-\mod -\Sch(n) @>{Q_\lambda \otimes_{\Sch(\lambda)} - }>>
C(\lambda) - \mod - \Sch(n) \\
@V{\Sch(n)e_\lambda \otimes_{\Sch(\lambda)} - }VV @VV{R^G_{L_\lambda}}V \\
\Sch(n) - \mod - \Sch(n) @>{Q \otimes_{\Sch(n)} - }>>  C(n) - \mod - \Sch(n)
\end{CD}
\end{equation*}

We will show that for $q$-Schur algebras multiplying with the idempotent
$e_\lambda$
corresponds to Harish-Chandra restriction. More precisely,
we will show that we have a commutative diagram

\begin{equation*}
\begin{CD}
\Sch(n) - \mod - \Sch(n) @>{Q \otimes_{\Sch(n)} - }>>  C(n) - \mod - \Sch(n)\\
@V{e_\lambda .  }VV @VV{^{*}R^G_{L_\lambda}}V \\
\Sch(\lambda)-\mod -\Sch(n) @>{Q_\lambda \otimes_{\Sch(\lambda)} - }>>
C(\lambda) - \mod - \Sch(n).
\end{CD}
\end{equation*}

That is the following holds

\bigskip

\begin{Lemma}
For any $\Sch(n)$-$\Sch(n)$-bimodule $N$, there is an
$C(\lambda)$-$\Sch(n)$-bimodule isomorphism
$$\hcr^G_{L_\lambda} Q \otimes_{\Sch(n)} N \cong Q_\lambda \otimes_{\Sch(\lambda)}
e_{\lambda} N.$$
\end{Lemma}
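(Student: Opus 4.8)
The plan is to deduce this lemma by adjunction from the already-established isomorphism $R_{L_\lambda}^G(Q_\lambda \otimes_{\Sch(\lambda)} M) \cong Q e_\lambda \otimes_{\Sch(\lambda)} M$, using that $\hcr^G_{L_\lambda}$ is the right adjoint of $R^G_{L_\lambda}$ and that $e_\lambda.\,-$ is the right adjoint of $\Sch(n)e_\lambda \otimes_{\Sch(\lambda)} -$. Concretely, observe that the functor $Q_\lambda \otimes_{\Sch(\lambda)} -$ is an equivalence $\Sch(\lambda)\text{-}\mod \to C(\lambda)\text{-}\mod$ (it induces the Morita equivalence), and similarly $Q \otimes_{\Sch(n)} -$ is an equivalence $\Sch(n)\text{-}\mod \to C(n)\text{-}\mod$; both are exact and send projectives to projectives. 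The strategy is therefore to check that the two composite functors $N \mapsto \hcr^G_{L_\lambda}(Q \otimes_{\Sch(n)} N)$ and $N \mapsto Q_\lambda \otimes_{\Sch(\lambda)} e_\lambda N$ are right adjoint to one and the same functor, hence naturally isomorphic; then transport the isomorphism back to the bimodule level.

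First I would record the adjunctions precisely. The functor $Q\otimes_{\Sch(n)}-$ has inverse $Q'\otimes_{C(n)}-$, and likewise for $Q_\lambda$, so on the left-hand vertical of the second diagram the functor $N \mapsto e_\lambda N$ composed with $Q_\lambda\otimes_{\Sch(\lambda)}-$ is, up to these equivalences, exactly the functor $e_\lambda.\,-$ on $\Sch(n)$-modules; its left adjoint is $\Sch(n)e_\lambda \otimes_{\Sch(\lambda)}-$ (a standard fact: multiplication by an idempotent is right adjoint to $\Sch(n)e_\lambda\otimes_{e_\lambda\Sch(n)e_\lambda}-$, and one identifies $e_\lambda\Sch(n)e_\lambda \cong \Sch(\lambda)$ via the embedding recalled earlier, compatibly with the $\Sch(\lambda)$-action). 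On the right-hand side, $\hcr^G_{L_\lambda}$ is by construction the right adjoint of $R^G_{L_\lambda}$. Now apply the contravariant "take left adjoints" operation to the already-commuting square
\begin{equation*}
\begin{CD}
\Sch(\lambda)\text{-}\mod @>{Q_\lambda \otimes_{\Sch(\lambda)} - }>> C(\lambda)\text{-}\mod \\
@V{\Sch(n)e_\lambda \otimes_{\Sch(\lambda)} - }VV @VV{R^G_{L_\lambda}}V \\
\Sch(n)\text{-}\mod @>{Q \otimes_{\Sch(n)} - }>> C(n)\text{-}\mod
\end{CD}
\end{equation*}
The horizontal arrows are equivalences, so their (quasi-)inverses are both left and right adjoint; passing to right adjoints of all four arrows yields exactly the square
\begin{equation*}
\begin{CD}
C(n)\text{-}\mod @>{Q' \otimes_{C(n)} - }>> \Sch(n)\text{-}\mod \\
@V{\hcr^G_{L_\lambda}}VV @VV{e_\lambda .\, - }V \\
C(\lambda)\text{-}\mod @>{Q'_\lambda \otimes_{C(\lambda)} - }>> \Sch(\lambda)\text{-}\mod
\end{CD}
\end{equation*}
commuting up to natural isomorphism, because adjoints of a commuting square of functors commute (uniqueness of adjoints). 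Rewriting this with $N' = Q\otimes_{\Sch(n)} N$, i.e. $Q'\otimes_{C(n)} N' \cong N$, gives $e_\lambda N \cong Q'_\lambda \otimes_{C(\lambda)} \hcr^G_{L_\lambda}(Q\otimes_{\Sch(n)} N)$, and applying the equivalence $Q_\lambda \otimes_{\Sch(\lambda)} -$ to both sides produces $Q_\lambda \otimes_{\Sch(\lambda)} e_\lambda N \cong \hcr^G_{L_\lambda}(Q \otimes_{\Sch(n)} N)$, which is the claim. All functors in sight are additive functors of $\Sch(n)$-bimodules in the second ($\Sch(n)$) variable by naturality, so the isomorphism is one of $C(\lambda)$-$\Sch(n)$-bimodules.

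The main obstacle I anticipate is purely the bookkeeping needed to justify "passing to right adjoints"—namely verifying that $e_\lambda.\,-$ really is the right adjoint of $\Sch(n)e_\lambda \otimes_{\Sch(\lambda)}-$ with the correct $\Sch(\lambda)$-structure (this uses $e_\lambda \Sch(n) e_\lambda \cong \Sch(\lambda)$ from \cite[4.2]{BDK} and \cite[4.6(5)]{D}, together with the fact that $e_\lambda \Sch(n)$ is finitely generated projective as a right $\Sch(\lambda)$-module so the tensor-hom adjunction identifies $\Hom_{\Sch(n)}(\Sch(n)e_\lambda, -)$ with $e_\lambda.\,-$), and that these identifications are compatible with the right $\Sch(n)$-module structures throughout so the final statement is genuinely an isomorphism of $C(\lambda)$-$\Sch(n)$-bimodules rather than merely of left $C(\lambda)$-modules. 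Once that compatibility is in place, the argument is a formal diagram chase. (Alternatively, one can give a direct computational proof: unwind $\hcr^G_{L_\lambda}(Q \otimes_{\Sch(n)} N) = e_{U_\lambda} Q \otimes_{\Sch(n)} N$ and use $e_{U_\lambda} Q \cong Q_\lambda \otimes_{\Sch(\lambda)} e_\lambda \Sch(n)$ as $C(\lambda)$-$\Sch(n)$-bimodules—itself a consequence of \cite[3.2(g)]{BDK}—then associativity of tensor product finishes it; but the adjunction route is cleaner and re-uses the diagram already displayed.)
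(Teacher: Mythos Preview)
Your adjoint argument is correct and is genuinely different from the paper's route. The paper proves the lemma by a direct chain of Hom--tensor manipulations: it first rewrites $\hcr^G_{L_\lambda}(Q\otimes_{\Sch(n)}N)$ via the Morita bimodule $Q_\lambda$ as $Q_\lambda\otimes_{\Sch(\lambda)}\Hom_{C(\lambda)}(Q_\lambda,\hcr^G_{L_\lambda}(Q\otimes_{\Sch(n)}N))$ using projectivity of $Q_\lambda$ and \cite[20.10]{AF}, then applies adjunction of $R^G_{L_\lambda}$ and $\hcr^G_{L_\lambda}$ together with $R^G_{L_\lambda}Q_\lambda\cong Qe_\lambda$ to turn the inner Hom into $\Hom_{C(n)}(Qe_\lambda,Q\otimes_{\Sch(n)}N)$, and finally invokes \cite[20.10]{AF} once more to identify this with $e_\lambda\Sch(n)\otimes_{\Sch(n)}N=e_\lambda N$. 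Your approach instead takes the already-established square with $R^G_{L_\lambda}$ and $\Sch(n)e_\lambda\otimes_{\Sch(\lambda)}-$ and passes wholesale to right adjoints, which is slicker and makes transparent that the lemma is the formal dual of \cite[4.2c]{BDK}; the paper's computation, on the other hand, makes the bimodule compatibility explicit at every step rather than appealing to naturality at the end.

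One small correction: in your parenthetical you write $e_\lambda\Sch(n)e_\lambda\cong\Sch(\lambda)$, but the paper only records an \emph{embedding} $\Sch(\lambda)\hookrightarrow e_\lambda\Sch(n)e_\lambda$ (the double coset representatives indexing the basis of $e_\lambda\Sch(n)e_\lambda$ range over $\mathfrak{S}_n$, not $\mathfrak{S}_\lambda$). This does not damage your argument, since the identification $\Hom_{\Sch(n)}(\Sch(n)e_\lambda,-)\cong e_\lambda\cdot(-)$ as $\Sch(\lambda)$-modules (via the right $\Sch(\lambda)$-action on $\Sch(n)e_\lambda$) holds regardless, and that is all you need for the adjunction; but the sentence as written is not accurate and should be amended.
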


\bigskip

\begin{proof}
The Morita equivalence between the subalgebras $\Sch(\lambda)$ and
$C(\lambda)$ induces an isomorphism of $C(\lambda)$-$\Sch(n)$-bimodules
$$\hcr^G_{L_\lambda} Q \otimes_{\Sch(n)} N \cong
Q_\lambda
\otimes_{\Sch(\lambda)} \Hom_{C(\lambda)}(Q_\lambda, C_{\lambda}) \otimes_{C(\lambda)}
{\hcr^G_{L_\lambda}} Q \otimes_{\Sch(n)} N.
$$
As $Q_\lambda$ is a projective $C(\lambda)$-module by
\cite[20.10]{AF} the term on the right hand side is isomorphic to
$
Q_\lambda
\otimes_{\Sch(\lambda)} {\rm Hom}_{C(\lambda)} (Q_\lambda,\hcr^G_{L_\lambda}
Q \otimes_{\Sch(n)} N ).$ By \cite[4.2]{BDK}
$R^G_{L_\lambda}Q_\lambda$ and $Qe_\lambda$
are isomorphic as
$C(n)$-$\Sch(\lambda)$-bimodules. Therefore adjunction of
Harish-Chandra induction and restriction
gives an isomorphism
$$
Q_\lambda
\otimes_{\Sch(\lambda)} {\rm Hom}_{C(\lambda)} (Q_\lambda,\hcr^G_{L_\lambda}
Q \otimes_{\Sch(n)} N ) \cong Q_\lambda
\otimes_{\Sch(\lambda)} \Hom_{C(n)}(Qe_\lambda, Q \otimes_{\Sch(n)}
N).$$
Now applying \cite[20.10]{AF} again, we obtain
$$ \Hom_{C(n)}(Qe_\lambda, Q \otimes_{\Sch(n)} N) \cong
\Hom_{C(n)}(Qe_\lambda, Q) \otimes_{\Sch(n)} N$$ where
$\Hom_{C(n)}(Qe_\lambda, Q) \cong e_\lambda \End_{C(n)}(Q) \cong
e_\lambda \Sch(n)$. The result follows.
\end{proof}

\bigskip

In particular, the two squares above still commute when we replace
$C_\O(n)$ by $\O Gf$ and $C_\O(\lambda)$ by $\O L_\lambda f_\lambda$
as well as when the horizontal arrows are reversed, that is when $Q
\otimes_{\Sch(n)} - $ is replaced by $Q' \otimes_{\O G f} - $ and
$Q_\lambda \otimes_{\Sch(\lambda)} - $ is replaced by $Q'_\lambda \otimes_{\O L_\lambda f_\lambda} - $.

\bigskip

In order to show that tensoring with $X_\Sch$ induces a derived equivalence, we
first show that $X_\Sch$ is isomorphic to the following complex $Y$.
Let
 $F =
Q' \otimes_{\O Gf} -  \otimes_{\O Gf} Q$ be the bimodule functor
induced by the Morita equivalence
 between $\O Gf$ and $\Sch_\O(n)$ and set
$$Y = F(fX_Gf).$$
Then $Y$ is a complex of $\Sch_\O(n)$-$\Sch_\O(n)$-bimodules with degree
$i$th term
$$
Y^i = \bigoplus_{\lambda \in \bar{\Lambda}(n-i,n)} Q' e_{U_\lambda}
\otimes_{\O L_\lambda f_\lambda} e_{U_\lambda} Q$$
for $0 \leq i \leq n-1$ and with differential
$$d^i = \sum_{\lambda \in \bar{\Lambda}(n-i,n)} \sum_{\stackrel{\nu
    \in \bar{\Lambda}(n-i-1,n)}{\lambda \prec \nu}} (-1)^{\lambda/\nu} F(\alpha_{\lambda,\nu}).
 $$

\bigskip

\begin{Proposition}
The complexes $X_\Sch$ and $Y$ are isomorphic as complexes of
$\Sch_\O(n)$-$\Sch_\O(n)$-bimodules.
\end{Proposition}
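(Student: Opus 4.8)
The plan is to identify $X_\Sch$ and $Y$ degree by degree and then to match up the two families of structure maps --- the canonical maps $\gamma_{\lambda,\nu}$ occurring in $X_\Sch$ and the maps $F(\alpha_{\lambda,\nu})$ occurring in $Y$. Both complexes arise as the cochain complexes of coefficient systems on the \emph{same} simplicial complex, namely the compositions of $n$ ordered by $\preceq$, so the sign $(-1)^{\lambda/\nu}$ attached to an edge $\lambda\prec\nu$ is literally the same in the two differentials; hence once the structure maps are identified we obtain an isomorphism of complexes of $\Sch_\O(n)$-$\Sch_\O(n)$-bimodules.

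\emph{The terms.} I would read each degree-$i$ term as the functor $(-)\otimes_{\Sch_\O(n)}N$ on $\Sch_\O(n)$-$\Sch_\O(n)$-bimodules and show that both $Y^\lambda\otimes_{\Sch_\O(n)}-$ and $X_\Sch^\lambda\otimes_{\Sch_\O(n)}-$ are naturally isomorphic to the composite
$$\bigl(Q'\otimes_{\O Gf}-\bigr)\circ R^G_{L_\lambda}\circ\hcr^G_{L_\lambda}\circ\bigl(Q\otimes_{\Sch_\O(n)}-\bigr),$$
whose first and last factors are the mutually inverse Morita functors between $\O Gf$ and $\Sch_\O(n)$. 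For $Y$ this is a direct rewriting: since Harish-Chandra induction and restriction respect the unipotent blocks, one has $Q'e_{U_\lambda}\otimes_{\O L_\lambda f_\lambda}-=\bigl(Q'\otimes_{\O Gf}-\bigr)\circ R^G_{L_\lambda}$ and $e_{U_\lambda}\bigl(Q\otimes_{\Sch_\O(n)}N\bigr)=\hcr^G_{L_\lambda}\bigl(Q\otimes_{\Sch_\O(n)}N\bigr)$, which assemble into exactly the above composite. For $X_\Sch$ one uses that by \cite[4.2c]{BDK} the functor $\Sch(n)e_\lambda\otimes_{\Sch(\lambda)}-$ corresponds to $R^G_{L_\lambda}$ and that by the Lemma just proved $e_\lambda\cdot(-)$ corresponds to $\hcr^G_{L_\lambda}$ --- both statements being available, as remarked above, with $C_\O(n)$, $C_\O(\lambda)$ replaced by $\O Gf$, $\O L_\lambda f_\lambda$ --- and then cancels an inverse pair of Morita functors in the middle. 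Evaluating the resulting natural isomorphism $Y^\lambda\otimes_{\Sch_\O(n)}-\cong X_\Sch^\lambda\otimes_{\Sch_\O(n)}-$ at $N=\Sch_\O(n)$ gives a bimodule isomorphism $Y^\lambda\cong X_\Sch^\lambda$.

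\emph{The structure maps.} The substantive point is then to verify that, under these identifications, $F(\alpha_{\lambda,\nu})$ corresponds to $\gamma_{\lambda,\nu}$. For this I would observe that, for $\lambda\prec\nu$ (so that $L_\lambda$ is a Levi subgroup of $L_\nu$), the bimodule map $\alpha_{\lambda,\nu}\colon x\otimes y\mapsto x\otimes y$ represents the canonical natural transformation $R^G_{L_\lambda}\circ\hcr^G_{L_\lambda}\Rightarrow R^G_{L_\nu}\circ\hcr^G_{L_\nu}$ arising from the transitivity of Harish-Chandra induction and restriction ($\hcr^G_{L_\lambda}\cong\hcr^{L_\nu}_{L_\lambda}\circ\hcr^G_{L_\nu}$, $R^G_{L_\lambda}\cong R^G_{L_\nu}\circ R^{L_\nu}_{L_\lambda}$) together with the adjunction counit $R^{L_\nu}_{L_\lambda}\circ\hcr^{L_\nu}_{L_\lambda}\Rightarrow\mathrm{id}$; and that $\gamma_{\lambda,\nu}\colon x\otimes y\mapsto x\otimes y$ represents the strictly parallel transformation between the corresponding $q$-Schur-algebra functors $\Sch(n)e_\bullet\otimes_{\Sch(\bullet)}(e_\bullet\cdot-)$, built from the analogous transitivity of the functors $e_\bullet\cdot(-)$ and $\Sch(n)e_\bullet\otimes_{\Sch(\bullet)}-$ and the analogous counit. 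Since the commutative squares of \cite{BDK} and of the preceding Lemma identify the whole adjoint pairs on the two sides --- with their units and counits --- and are compatible with these transitivity isomorphisms, the two canonical transformations match. This naturality check is the main obstacle: it is the one place where one must go beyond the mere existence of the termwise isomorphisms. It can alternatively be carried out directly, since every map occurring is either of the form $x\otimes y\mapsto x\otimes y$ or one of the explicit isomorphisms of \cite{BDK}. Once it is established, the termwise isomorphisms of the previous paragraph commute with the differentials, and evaluating at $N=\Sch_\O(n)$ yields the asserted isomorphism $X_\Sch\cong Y$ of complexes of $\Sch_\O(n)$-$\Sch_\O(n)$-bimodules.
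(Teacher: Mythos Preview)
Your proposal is correct and follows essentially the same route as the paper: both arguments insert the Morita bimodules $Q_\lambda,Q'_\lambda$ and use the commutative squares from \cite[4.2c]{BDK} and the preceding Lemma to identify each term $\Sch(n)e_\lambda\otimes_{\Sch(\lambda)}e_\lambda\Sch(n)$ with $Q'e_{U_\lambda}\otimes_{\O L_\lambda f_\lambda}e_{U_\lambda}Q$, and then appeal to functoriality for the compatibility with the differentials. Your treatment of the structure maps via adjunction counits and transitivity is considerably more explicit than the paper's, which dispatches this step with the single sentence ``as all isomorphisms involved are functorial, they clearly commute with the differentials''.
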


\bigskip

\begin{proof}
We start by showing the terms in each degree are isomorphic
bimodules. Let $\lambda$ be a composition of $n$, then
$\Sch(n) e_\lambda \otimes_{\Sch(\lambda)} e_\lambda \Sch(n) =
\Sch(n) e_\lambda \otimes_{\Sch(\lambda)} Q'_\lambda
\otimes_{\Sch(\lambda)} Q_\lambda
\otimes_{\Sch(\lambda)} e_\lambda \Sch(n)$. By the preceding lemma
$Q_\lambda
\otimes_{\Sch(\lambda)} e_\lambda \Sch(n) \cong e_{U_\lambda} Q$. In
 a similar way $\Sch(n) e_\lambda \otimes_{\Sch(\lambda)} Q'_\lambda
otimes_{\Sch(\lambda)} e_{U_\lambda} Q$ is isomprphic to
$Q' \otimes_{\O Gf} \O Gf e_{U_\lambda}
\otimes_{\O L_\lambda f_\lambda} e_{U_\lambda} Q \cong Q' e_{U_\lambda}
\otimes_{\O L_\lambda f_\lambda} e_{U_\lambda} Q$. Then as all isomorphisms
involved are functorial, they clearly commute with the differentials.
\end{proof}

\bigskip

{\it Proof of theorem \ref{qSchur}.}
The functor $$Q' \otimes_{\O Gf} - \otimes_{\O Gf} Q \; :\;\;  \O Gf - \mod -\O
Gf \;\; \longrightarrow  \;\; \Sch(n) -\mod - \Sch(n)$$ is an equivalence of bimodule
categories. The complex  $Y$ is isomorphic to
$Q' \otimes_{\O Gf} fX_Gf \otimes_{\O Gf} Q$ and by
\cite{S2} the functor $f X_G f \otimes_{\O Gf} - $ induces a self-derived
equivalence of the module category of $\O Gf$.  Therefore the functor
$Y \otimes_{\Sch_\O(n)} -$ induces a self-equivalence of ${\mathcal D}^\flat (\Sch_\O(n))$.
By \cite[2.2]{R} the complex $k \otimes_{\O} Y = Y_k$ induces a
self-derived equivalence of the module category of $\Sch_k(n)$.

\bigskip

Denote by $D_\Sch$ the induced map in the Grothendieck group
$K_0(\Sch_R(n))$ for $R \in \{K, k\}$.
ia the Morita equivalence between $\Sch(n)$ and $C(n)$,
he standard module $\Delta(\lambda)$ corresponds to
the Specht module $S(1,\lambda')$ labelled by the conjugate
partition and the costandard module $\nabla(\lambda)$ corresponds to
the dual Specht module $S(1,\lambda)^*$ (see \cite{T} or \cite{BDK}).
It follows then from proposition \ref{DGirred} and the above
that
$D_\Sch$ sends $\Delta(\lambda)$ to $\Delta(\lambda')$ and
$\nabla(\lambda)$ to $\nabla(\lambda)'$ in $K_0(\Sch_R(n))$.
Furthermore, $L(\lambda)$ is the head of $\Delta(\lambda)$ and $P
{\mathcal Z}_u P^{-1}$ is the decomposition matrix for
$\Sch_k(n)$. Thus part {\it (iii)} of the theorem follows.

\hfill$\Box$

\bigskip

\section{Decomposition matrices}

In this last section we recapitulate the information we have
obtained on the different decomposition matrices. Recall the notation
${\mathcal Z}_G$ and ${\mathcal Z}_u$ for the decomposition matrix of
$G$ and its unipotent part. Set $\Z_u^e$ for the matrix given by the
rows and columns of $\Z_u$ indexed by $e$-regular partitions of
$n$. Denote by $\Z_\H$ the decomposition matrix
of $\H$, that is the matrix whose coefficients are given by
$[S^\lambda : D^\nu]$ for $\lambda$ a partition of $n$ and $\nu$ an
$e$-regular partition of $n$. Finally denote by $\Z_\Sch$ the
decomposition matrix of the $q$-Schur algebra $\Sch(n)$ with
coefficients given by $[\Delta(\lambda) : L(\nu)]$ for partitions
$\lambda$, $\nu$ of $n$. Recall that $\Z_\Sch = P \Z_u P^{-1}$.

Then combining the previous results we have the following relations

\bigskip

\begin{Theorem} The following matrix equations hold

(i) $A_G = \Z_u^{-1} P \Z_u$

(ii) $\Z_u^e A_\H = P \Z_u^e $

(iii) $A_\Sch =  \Z_\Sch^{-1} P \Z_\Sch = P^{-1} A_G P. $
\end{Theorem}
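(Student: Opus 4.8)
The plan is to collect the three identities from the work done in the earlier sections and to observe that parts (i) and (ii) are literally restatements of results already proved, while part (iii) requires one small additional argument linking $A_\Sch$ to $A_G$.

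First I would dispatch (i): this is exactly the content of Theorem~\ref{decompositionmatrixG}(i), where it was shown that $\Z_u \cdot A_G = P \cdot \Z_u$ using that $D_G([S(1,\lambda)]) = [S(1,\lambda')]$ in $K_0(kG)$ together with the lower unitriangularity of $\Z_u$ (so that $\Z_u$ is invertible); rearranging gives $A_G = \Z_u^{-1} P \Z_u$.

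Next, for (ii), I would invoke Theorem~\ref{Bernd1}, which identifies $A_\H$ with the permutation matrix of the Mullineux map $m$ restricted to $e$-regular partitions. The argument parallels (i): one restricts the identity $D_\H([S^\lambda]) = \sum_\mu \cdots$ — more precisely one uses that the decomposition map intertwines $D_\H$ on $K_0(\H_k)$ with the duality on Specht-module classes, via the functor $\beta = M \otimes_\H \_$ of Lemma~\ref{Bernd2}. Since $\beta(S^\lambda)$ has the same class-theoretic behaviour as $S(1,\lambda)$ and $D_G([S(1,\lambda)])=[S(1,\lambda')]$, comparing coefficients in the basis of Specht classes yields $\Z_u^e \cdot A_\H = P \cdot \Z_u^e$, where $\Z_u^e$ is the $e$-regular submatrix of $\Z_u$. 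Here one must be slightly careful that $\Z_u^e$ is itself invertible, which follows since $\Z_u$ is lower unitriangular and $e$-regular partitions form an order ideal-compatible subset for the purposes of this submatrix (the relevant block of a unitriangular matrix is unitriangular).

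Finally, for (iii): the equation $A_\Sch = \Z_\Sch^{-1} P \Z_\Sch$ follows from Theorem~\ref{qSchur}, since $D_\Sch$ sends $\Delta(\lambda)$ to $\Delta(\lambda')$, $\Z_\Sch = P \Z_u P^{-1}$ is the decomposition matrix of $\Sch_k(n)$, and the same Bruhat/unitriangularity bookkeeping as in (i) applies with $\Z_\Sch$ in place of $\Z_u$. It remains to prove $\Z_\Sch^{-1} P \Z_\Sch = P^{-1} A_G P$. Substituting $\Z_\Sch = P \Z_u P^{-1}$ gives $\Z_\Sch^{-1} P \Z_\Sch = P \Z_u^{-1} P^{-1} \cdot P \cdot P \Z_u P^{-1} = P \Z_u^{-1} P \Z_u P^{-1}$; using $A_G = \Z_u^{-1} P \Z_u$ from (i), this equals $P A_G P^{-1}$. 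The only subtlety is the direction of conjugation: since $P$ is the permutation matrix of conjugation of partitions and $P^2 = 1$ (conjugation is an involution), $P A_G P^{-1} = P^{-1} A_G P$, which is the stated form. I expect the main (though minor) obstacle to be keeping the indexing conventions straight — in particular verifying $P^2 = \mathrm{id}$ and matching Theorem~\ref{qSchur}(iii)'s appearance of $a_{\lambda'\mu'}$ with the matrix product $P^{-1} A_G P$ — rather than any genuinely new mathematical input.
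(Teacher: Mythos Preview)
Your treatment of (i) and (iii) is correct and matches the paper, which presents this theorem as a summary ``combining the previous results'' and gives no separate proof. In particular, your derivation of (iii) via $\Z_\Sch = P\Z_u P^{-1}$ and $P^2=1$ is exactly what is intended, and you could alternatively read off the second equality $A_\Sch = P^{-1}A_GP$ directly from Theorem~\ref{qSchur}(iii), since $(A_\Sch)_{\lambda\mu}=a_{\lambda'\mu'}$ is precisely the $(\lambda,\mu)$-entry of $P^{-1}A_GP$.

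For (ii), however, your argument has a real gap. You appeal to the functor $\beta=M\otimes_\H-$ and assert that $\beta(S^\lambda)$ ``has the same class-theoretic behaviour as $S(1,\lambda)$'', but Lemma~\ref{Bernd2} and the proof of Theorem~\ref{Bernd1} only give $\beta(L^\lambda)\cong L(1,\lambda)$; in the Grothendieck group $\beta([S^\lambda])=\sum_{\mu\ e\text{-reg}}(\Z_\H)_{\lambda\mu}[L(1,\mu)]$, which is \emph{not} $[S(1,\lambda)]$ in general since $S(1,\lambda)$ typically has composition factors $L(1,\nu)$ with $\nu$ not $e$-regular. More fundamentally, you do not confront the indexing problem in the statement: $\Z_u^e$ and $A_\H$ are square matrices indexed by $e$-regular partitions, while $P$ is the permutation matrix for conjugation on \emph{all} partitions, and conjugation does not preserve $e$-regularity, so the product $P\Z_u^e$ does not parse as written. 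The natural route is to restrict the identity $\Z_u A_G = P\Z_u$ from (i) to $e$-regular rows and columns and invoke Theorem~\ref{Bernd1} to replace the $e$-regular block of $A_G$ by $A_\H$; but then one must show that the cross terms $\sum_{\mu\ \text{not }e\text{-reg}} z_{\lambda\mu}a_{\mu\nu}$ vanish for $\lambda,\nu$ $e$-regular, and neither your argument nor the paper supplies this step. Your remark that $\Z_u^e$ is invertible is correct (any principal submatrix of a lower unitriangular matrix is again lower unitriangular), but the phrase ``order ideal-compatible subset'' is not the reason.
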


\bibliographystyle{acm}

\end{document}